\newtheorem{prop}{Proposition}
\newtheorem{thm}[prop]{Theorem}
\newtheorem{cor}[prop]{Corollary}
\newtheorem{lem}[prop]{Lemma}
\theoremstyle{definition}
\newtheorem{defn}[prop]{Definition}
\newtheorem{expl}[prop]{Example}
\newtheorem{rem}[prop]{\it Remark}
\newtheorem{emp}[prop]{}
\numberwithin{equation}{section}
\newcommand{\bP}{\mathbb{P}}
\newcommand{\bC}{\mathbb{C}}
\newcommand{\bR}{\mathbb{R}}
\newcommand{\bQ}{\mathbb{Q}}
\newcommand{\bZ}{\mathbb{Z}}
\newcommand{\tX}{\widetilde{X}}
\newcommand{\tY}{\widetilde{Y}}
\newcommand{\tg}{\widetilde{g}}
\newcommand{\cO}{\mathcal{O}}
\newcommand{\cL}{\mathcal{L}}
\newcommand{\cM}{\mathcal{M}}
\newcommand{\cE}{\mathcal{E}}
\newcommand{\cN}{\mathcal{N}}
\newcommand{\hX}{\hat{X}}
\newcommand{\Supp}{\mathrm{Supp}~}
\newcommand{\mult}{\mathrm{mult}}
\newcommand{\Ext}{\mathrm{Ext}}
\newcommand{\Ex}{\mathrm{Ex}}
\newcommand{\rom}[1]{\lowercase\expandafter{\romannumeral #1\relax}}
\begin{document}

\title{Characterization of projective spaces by Seshadri constants}

\author{Yuchen Liu}
\address{Department of Mathematics, Princeton University, Princeton, NJ, 08544-1000.}
\email{yuchenl@math.princeton.edu}

\author{Ziquan Zhuang}
\address{Department of Mathematics, Princeton University, Princeton, NJ, 08544-1000.}
\email{zzhuang@math.princeton.edu}

\date{\today}

\begin{abstract}
 We prove that an $n$-dimensional complex projective variety is isomorphic to $\bP^n$ if the Seshadri constant of the anti-canonical divisor at some smooth point is greater than $n$. We also classify complex projective varieties with Seshadri constants equal to $n$.
\end{abstract}

\maketitle

\section{Introduction}

It is believed that the projective space $\bP^n$ has the most positive anti-canonical divisor among complex projective varieties. Various characterizations of $\bP^n$ have been found corresponding to different explanations of the ``positivity" of the anti-canonical divisor. Using Kodaira vanishing theorem, Kobayashi and Ochiai \cite{ko73} proved that if an $n$-dimensional projective manifold $X$ with an ample line bundle $H$ satisfies $-K_X\equiv (n+1)H$, then $(X,H)\cong (\bP^n,\cO(1))$. Kobayashi-Ochiai's characterization was generalized by Ionescu \cite{ion86} (in the smooth case) and Fujita \cite{fuj87} (allowing Gorenstein rational singularities) assuming the weaker condition that $K_X+(n+1)H$ is not ample.
Later, Cho, Miyaoka and Shepherd-Barron \cite{cmsb} (simplified by Kebekus in \cite{keb02}) showed that a Fano manifold is isomorphic to $\bP^n$ if the anti-canonical degree of every curve is at least $n+1$. Their proofs rely on deformation of rational curves which still works if we allow isolated local complete intersection quotient singularities (see \cite{ct07}).
Besides, Kachi and Koll\'ar \cite{kk00} gave characterizations of $\bP^n$ in arbitrary characteristic that generalized \cite{ko73} and \cite{cmsb, keb02} with a volume lower bound assumption.

The purpose of this paper is to provide a characterization of $\bP^n$ among complex $\bQ$-Fano varieties by the local positivity of the anti-canonical divisor, namely the \textit{Seshadri constants}. 
Recall that a complex projective variety $X$ is said to be {\it $\bQ$-Fano} if $X$ has klt singularities and $-K_X$ is an ample $\bQ$-Cartier divisor. 

\begin{defn}
 Let $X$ be a normal projective variety and $L$ an ample $\bQ$-Cartier divisor
 on $X$. Let $p\in X$ be a smooth point. The \textit{Seshadri constant} of $L$ at $p$,
 denoted by $\epsilon(L,p)$, is defined as
 \[
  \epsilon(L,p):=\sup\{x\in\bR_{>0}\mid \sigma^*L-xE\textrm{ is ample}\},
 \]
 where $\sigma:\mathrm{Bl}_p X\to X$ is the blow-up of $X$ at $p$, and
 $E$ is the exceptional divisor of $\sigma$.
\end{defn}

It is clear that $\epsilon(-K_{\bP^n},p)=n+1$ for any point $p\in \bP^n$.
Our main result characterizes $\bP^n$ as the only $\bQ$-Fano variety with Seshadri constant bigger than $n$:

\begin{thm}\label{mainthm}
 Let $X$ be a complex $\bQ$-Fano variety of dimension $n$. 
 If there exists a smooth point $p\in X$ such that $\epsilon(-K_X,p)>n$, then $X\cong\bP^n$.
\end{thm}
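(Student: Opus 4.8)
The plan is to convert the Seshadri hypothesis into a statement about a minimal dominating family of rational curves and then run a Cho--Miyaoka--Shepherd-Barron type argument. The starting point is the curve characterization of the Seshadri constant at a smooth point, $\epsilon_p(-K_X)=\inf_{C\ni p}\frac{-K_X\cdot C}{\mult_p C}$, where the infimum runs over all irreducible curves through $p$; thus $\epsilon_p(-K_X)>n$ says precisely that $-K_X\cdot C>n\cdot\mult_p C$ for every such $C$. Since the point $p$ is presumably not general, I would first pass to a very general point: the Seshadri constant is lower semicontinuous and attains its maximum at very general points, so $\epsilon_p(-K_X)>n$ forces $\epsilon_x(-K_X)>n$ for very general $x\in X$ as well. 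As $X$ is $\bQ$-Fano it is uniruled, and I fix a minimal dominating family $\mathcal{H}$ of rational curves, of anticanonical degree $d:=-K_X\cdot C$. For very general (hence smooth) $x$ a member of $\mathcal{H}$ passes through $x$ and is smooth there, so $\epsilon_x(-K_X)\le d$, giving $d>n$; on the other hand bend-and-break gives $d\le n+1$. This pinches $n<d\le n+1$.

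Next I would exploit this pinching by a dimension count on the family. Writing $\mathrm{ev}\colon\mathcal{U}\to X$ for the evaluation of the universal family, the space $\mathcal{H}_x$ of members through $x$ has dimension at least $\dim\mathcal{H}+1-n\ge d-2>n-2$, hence (being an integer) at least $n-1$. For very general $x$ the curves are free and immersed at the smooth point $x$, so the tangent map $\tau_x\colon\mathcal{H}_x\dashrightarrow\bP(T_xX)\cong\bP^{n-1}$ is defined, and by Kebekus's theorem it is finite onto its image. A finite morphism preserves dimension, so the variety of minimal rational tangents $\mathcal{C}_x:=\overline{\tau_x(\mathcal{H}_x)}\subseteq\bP^{n-1}$ satisfies $\dim\mathcal{C}_x\ge n-1$; being contained in $\bP^{n-1}$ it must equal all of $\bP^{n-1}$. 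In other words, through a very general point there is a minimal rational curve in every tangent direction (and, tracing the equalities back through $\dim\mathcal{H}=d+n-3$, the degree $d$ is forced to be exactly $n+1$).

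Finally, a full variety of minimal rational tangents at a general point is exactly the hypothesis of the Cho--Miyaoka--Shepherd-Barron characterization (equivalently, a covering family of rational curves of anticanonical degree $n+1$), which I would invoke to conclude $X\cong\bP^n$. The step I expect to be the genuine obstacle is making this entire program work for a singular $\bQ$-Fano variety rather than a manifold. As the introduction notes, the deformation theory of rational curves, the finiteness of the tangent map, and the concluding CMSB step are developed for manifolds and extend only as far as isolated local complete intersection quotient singularities; for arbitrary klt singularities one must ensure that general minimal curves avoid $\mathrm{Sing}\,X$ and remain free and immersed, that the expected-dimension estimate $\dim\mathcal{H}=d+n-3$ persists, and that the final identification of $\bP^n$ survives. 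I would expect to handle this either by showing that the large Seshadri constant forces the singularities met by the relevant curves to be very mild, or by reducing the endgame to the global volume bound $(-K_X)^n>n^n$ supplied by the hypothesis together with birational-geometry input.
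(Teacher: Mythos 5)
Your outline is essentially the rational-curves/VMRT route, which is close to how Bauer--Szemberg handled the \emph{smooth} case (credited in the paper's introduction), and for smooth $X$ the skeleton you give is sound. But the theorem is about arbitrary klt $\bQ$-Fano varieties, and the obstacle you flag in your final paragraph is not a deferrable technicality: it is the entire content of the theorem, and neither of your proposed repairs can close it. Every ingredient you invoke --- the estimate $\dim\mathcal{H}_x\ge d-2$, properness of $\mathcal{H}_x$, Kebekus's finiteness of the tangent map, and the concluding CMSB identification --- requires the minimal curves \emph{and all their degenerations} to lie in the smooth locus, or to meet at worst isolated LCI quotient singularities as in \cite{ct07}. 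For a klt variety this can genuinely fail: a minimal dominating family may consist entirely of curves passing through $\mathrm{Sing}\,X$ (already on the quadric cone the minimal curves are the rulings, all through the vertex), while a family constrained to lie in the smooth locus need not be proper or of minimal degree, since bend-and-break degenerations can fall into $\mathrm{Sing}\,X$; one then loses simultaneously the bound $d\le n+1$, the lower bound on $\dim\mathcal{H}_x$, and the hypotheses of Kebekus's theorem. Your repair (a) has no mechanism behind it: $\epsilon_p(-K_X)$ is computed at a single very general smooth point and gives no local control of singularities elsewhere; indeed the equality case $\epsilon_p(-K_X)=n$ (Theorem \ref{thm:equality}) is achieved by quite singular varieties, so any ``large Seshadri constant forces mild singularities'' statement would have to exploit the strict inequality in an essential way --- i.e., it would \emph{be} the theorem. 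Your repair (b) fails outright: $\epsilon_p(-K_X)>n$ does give $((-K_X)^n)\ge \epsilon_p(-K_X)^n>n^n$, but this volume bound does not single out $\bP^n$ among klt Fanos --- e.g.\ $\bP(1,\dots,1,2)$ has anticanonical volume $(n+2)^n/2>n^n$ --- which is precisely why Theorem \ref{ding-pn} requires the Ding-semistability hypothesis and is deduced \emph{from} Theorem \ref{mainthm}, not the other way around.

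For contrast, the paper's proof is engineered to avoid deformation theory of rational curves altogether, which is what makes klt singularities harmless: blow up $p$, note that $D=\sigma^*(-K_X)-\epsilon_p(-K_X)E$ is nef with $D-K_{\hX}$ ample, hence $D$ is semiample by the basepoint-free theorem; Kawamata--Viehweg vanishing shows the associated contraction $g:\hX\to Y$ embeds $E$; cutting by pullbacks of hyperplanes down to surfaces (Lemmas \ref{surfacecont} and \ref{birlocal}) shows that a birational $g$ would force its contracted curves to have anticanonical degree at most $1$, contradicting $\epsilon_p(-K_X)-n+1>1$; hence $g$ is of fiber type, Lemma \ref{ruled} makes $\hX$ a $\bP^1$-bundle over $g(E)\cong\bP^{n-1}$, and a splitting argument identifies $\hX$ with the blow-up of $\bP^n$ at a point, so $X\cong\bP^n$. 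To push your program through instead, you would need a theory of minimal rational curves avoiding the singular locus of a klt Fano variety, with the attendant properness, dimension, and tangent-map statements --- an open problem, not a routine extension.
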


Note that Theorem \ref{mainthm} only assumes that $\epsilon(-K_X,p)>n$ for \textit{some} smooth point $p$ rather than \textit{any} smooth point (although the existence of such $p$ immediately implies the same inequality for a general smooth point). We also remark here that when $X$ is smooth, Theorem \ref{mainthm} was obtained by Bauer and Szemberg in \cite[Theorem 1.7]{bs} using different methods.

Since the Seshadri constant of a quadric hypersurface in $\bP^{n+1}$ is equal to $n$, the lower bound on the Seshadri constant in Theorem \ref{mainthm} is sharp. It turns out that this is not the only $\bQ$-Fano varieites achieving such lower bound, and the full list is given by the following theorem.

\begin{thm} \label{thm:equality}
Let $X$ be a $n$-dimensional complex $\bQ$-Fano variety. Then there exists a smooth point $p\in X$ with $\epsilon(-K_X,p)=n$ if and only if $X$ is one of the following:
\begin{enumerate}
\item a degree $d+1$ weighted hypersurface $X_{d+1}=(x_0x_{n+1}=f(x_1,\cdots,x_n))\subset\mathbb{P}(1^{n+1},d)$,
\item a quartic weighted hypersurface $X_4=(x_{n+1}^2+x_n h(x_0,\cdots,x_{n-1})=f(x_0,\cdots,x_{n-1}))$ $(h\neq0)$ or $(x_n x_{n+1}=f(x_0,\cdots,x_{n-1}))\subseteq\mathbb{P}(1^n,2, 2)$,
\item the blow-up of $\bP^n$ along the complete intersection of a hyperplane and a hypersurface of degree $d\le n$,
\item the quotient of
the quadric $Q_k=(\sum_{i=0}^k x_i^2=0)\subseteq\mathbb{P}^{n+1}\,(2\leq k\leq n+1)$ by an involution $\tau(x_i)=\delta_i x_i\,(\delta_i=\pm1)$ that
is fixed point free in codimension $1$ and such that not all the $\delta_i(i=0,\cdots,k)$ are the same,
\item a Gorenstein log Del Pezzo surface of degree $\ge4$ \emph{(}for the classification of such surfaces, see \cite[\textsection 3]{anti-canonical}\emph{)}.
\end{enumerate}
\end{thm}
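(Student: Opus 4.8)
The plan is to prove the two implications separately, with the reverse implication (that each listed variety does occur) being a direct, if laborious, verification and the forward implication carrying the real content. For the ``if'' direction I would, for each of the five families, exhibit a curve $C$ through a suitable smooth point $p$ with $\frac{(-K_X)\cdot C}{\mult_p C}=n$, which forces $\epsilon_p(-K_X)\le n$, and then check that $\sigma^*(-K_X)-nE$ is nef to obtain the reverse bound. For the quadric quotients (4) the relevant $C$ is the image of a line on the quadric, for the weighted hypersurfaces (1)--(2) it is the image of a weighted coordinate line, and for the blow-ups (3) it is a fibre of the fibration $X\to\bP^1$ resolving $[H^d:G_d]$; the surface case (5) is read off from the explicit list in \cite[\textsection 3]{anti-canonical}.

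For the ``only if'' direction, fix a smooth point $p$ with $\epsilon_p(-K_X)=n$ and form $\sigma\colon Y=\mathrm{Bl}_pX\to X$ with exceptional divisor $E\cong\bP^{n-1}$, $\mathcal{O}_E(E)=\mathcal{O}(-1)$. The guiding observation is that $n$ sits just above the threshold $n-1$ at which $-K_Y=\sigma^*(-K_X)-(n-1)E$ is still ample: since $X$ is klt and $p$ is smooth, $Y$ is again $\bQ$-Fano, while $L:=\sigma^*(-K_X)-nE=-(K_Y+E)$ is nef. A direct computation on $Y$, using $\sigma^*(-K_X)\cdot E\equiv 0$ and $E^n=(-1)^{n-1}$, gives $L^n=(-K_X)^n-n^n\ge 0$, so that $(-K_X)^n\ge n^n$ with equality precisely when $L$ fails to be big. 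This sharp numerical bound, together with the geometry of the non-ample locus of $L$, will drive the classification; the strict case $\epsilon_p>n$ is already disposed of by Theorem \ref{mainthm}, so throughout we sit exactly at the boundary.

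Next I would convert the nefness of $L$ into a singular anticanonical divisor. For every $0<\delta<n$ the class $\sigma^*(-K_X)-(n-\delta)E$ is ample, hence some multiple is effective, producing $D_\delta\sim_{\bQ}-K_X$ with $\mult_pD_\delta\ge n-\delta$; when $L$ is big the volume bound furnishes enough sections to pass to an honest effective $D\sim_{\bQ}-K_X$ with $\mult_pD\ge n$, so $\lct_p(X,D)\le 1$ and, setting $c=\lct_p(X,D)$, the pair $(X,cD)$ is log canonical but not klt at $p$. Let $Z\ni p$ be its minimal log canonical centre. Since $K_X+cD\sim_{\bQ}0$, Kawamata's subadjunction equips $Z$ with a boundary $D_Z\ge 0$ such that $K_Z+D_Z\sim_{\bQ}0$ and $(Z,D_Z)$ is a klt log Calabi--Yau pair, and the degree of $Z$ with respect to $-K_X$ is controlled by the numerics above. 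The classification then proceeds by analyzing $\dim Z$ and the relation of $(Z,D_Z)$ to the nef class $L$: the case $\dim Z=0$ (isolated centre at $p$) combined with the sharp bound $(-K_X)^n\ge n^n$ is the most rigid and should single out the quadric quotients (4) and the weighted hypersurfaces (1)--(2), while positive-dimensional $Z$, understood through the morphism defined by $L$, should yield the fibred examples (3); for $n=2$ all cases degenerate into the del Pezzo list (5).

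I expect the main obstacle to be twofold. First, producing the log canonical pair with its centre cut down to exactly the predicted dimension requires a tie-breaking/perturbation argument to guarantee uniqueness of the minimal centre through $p$, together with control of connectedness of the non-klt locus; moreover the boundary case $(-K_X)^n=n^n$, where $L$ is nef but not big, is genuinely delicate, since there the existence of an effective $D$ with $\mult_pD\ge n$ must be extracted by a limiting argument or by studying the fibration that $L$ defines. Second, and more demanding, is matching the abstract output of subadjunction to the \emph{explicit} normal forms in (1)--(4): reconstructing the weighted-projective embedding, or the quadric and its involution, from the data $(Z,D_Z)$ and the class $L$ is where the bulk of the case-by-case work lies. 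The surface case (5) is by contrast soft, as it reduces to the known classification of Gorenstein log del Pezzo surfaces of degree $\ge 4$.
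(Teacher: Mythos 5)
Your ``if'' direction is essentially sound and close to the paper's (the paper gets the upper bound $\epsilon_p(-K_X)\le n$ for free from Theorem \ref{mainthm}, since none of the listed varieties is $\bP^n$, and then checks nefness of $\sigma^*(-K_X)-nE$ case by case, as you propose). The ``only if'' direction, however, has genuine gaps, and I do not believe the route you sketch can be completed. First, a concrete error: from $\mult_p D\ge n$ you get $c=\lct_p(X,D)\le 1$, but your assertion $K_X+cD\sim_{\bQ}0$ holds only when $c=1$; in general $K_X+cD\sim_{\bQ}-(1-c)K_X$ is anti-ample, so Kawamata subadjunction equips the minimal lc centre with a log \emph{Fano} structure, not a klt log Calabi--Yau pair, and the analysis you build on that identity does not start. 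Second, the boundary case $(-K_X)^n=n^n$, where $L=\sigma^*(-K_X)-nE$ is nef but not big, is not a delicate corner to be patched later: it comprises \emph{exactly} two of the five families in their entirety (the quadric quotients (4) and the quartics in $\bP(1^n,2,2)$ (2), both of which have $(-K_X)^n=n^n$). There the volume count only produces $D_\delta\in|-m_\delta K_X|_{\bQ}$ with $\mult_p D_\delta\ge n-\delta$ and $m_\delta\to\infty$, and no limiting argument extracts an effective $\bQ$-divisor from this data. Relatedly, your proposed case division by $\dim Z$ misassigns the families: in the actual geometry, (1) and (3) occur precisely when $L$ is big, and (2) and (4) precisely when it is not, whereas you group (1), (2), (4) together as the ``rigid'' case.

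The missing idea is that $L$ is not merely nef but \emph{semiample}: since $L-K_{\hX}=2\bigl(\sigma^*(-K_X)-(n-\tfrac{1}{2})E\bigr)$ is ample, Shokurov's basepoint-free theorem applies, and the whole classification in the paper is the study of the resulting contraction $g=|kL|:\hX\to Y$. Kawamata--Viehweg vanishing shows $g|_E$ is a closed embedding; when $g$ is birational, $\hX$ is the blow-up of $Y$ along a divisor in $g(E)\cong\bP^{n-1}$ (Lemmas \ref{birlocal} and \ref{lem:blowup}) and the possible pairs $(Y,g(E))$ are pinned down by a Lefschetz/cyclic-cover/extremal-ray argument (Lemma \ref{lem:Y}), yielding (1), (3) and the degree $\ge 5$ surfaces; when $g$ is of fiber type, base change along the double section $E$ and normalization produce a conic bundle over $\bP^{n-1}$ with two sections (Lemmas \ref{fiblocal2}, \ref{basechange}, \ref{disjoint}), yielding (2), (4) and the degree $4$ surfaces. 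Note that semiampleness also hands you, for free, the effective divisors your first step tries to manufacture, even in the non-big case. By contrast, the subadjunction output $(Z,D_Z)$ --- even were it set up correctly --- carries far less information than the morphism $g$, and your proposal offers no mechanism for recovering the explicit normal forms (weighted hypersurface equations, the quadric involution, the blow-up structure) from it; that reconstruction is precisely where the paper's dedicated lemmas do their work.
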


When $X$ is smooth, the condition $\epsilon(-K_X,p)=n$ implies that $(-K_X\cdot C)\ge n$ for any curve $C\subset X$ passing through a very general point $p$. If in addition $X$ has dimension at least 3, then by \cite{miy04} and \cite{cd} $X$ is either a quadric hypersurface or the blow-up of $\bP^n$ along a smooth subvariety of codimension 2 and degree $d\le n$ contained in a hyperplane. On the other hand, in the surface case some of our results have been proved by \cite[Theorem 1.8]{sano} under the somewhat restrictive assumption that $(K_X^2)\in\{4,5,6,7,8,9\}$. Hence the above theorem is a natural generalization of their results to the singular and higher dimensional case, although our proof uses a completely different strategy.

Finally we show that in general the Seshadri constant $\epsilon(-K_{X},p)$ can be any rational number between $0$ and $n$. This is in sharp contrast with Theorem \ref{mainthm} where we have seen that there is a gap between $n$ and $n+1$ for the possible values of $\epsilon(-K_{X},p)$.

\begin{thm}\label{ratsesh}
 For any rational number $0<c\leq n$, there exists an $n$-dimensional $\bQ$-Fano variety $X$ with a smooth point $p$ such that $\epsilon(-K_X,p)=c$.
\end{thm}

\medskip

The paper is organized as follows. In Section \ref{sec2}, we prove Theorem \ref{mainthm}. Denote the blow up of $X$ at $p$ by $\sigma:\hX=\mathrm{Bl}_p X\to X$, then the divisor $D:=\sigma^*(-K_X)-\epsilon(-K_X,p)E$ is nef
by the definition of the Seshadri constant. Under the assumption that $\epsilon(-K_X,p)>n$, we use Kawamata-Viehweg vanishing theorem to show that $D$ is semiample and $g=|kD|:\hX\to Y$ maps $E$ isomorphically onto its image for sufficiently divisible $k$. A simple computation yields that $(-K_{\hX}\cdot C)=\epsilon(-K_X,p)-(n-1)>1$ for any curve $C$ contracted by $g$. We show in Lemma \ref{birlocal} that $g$ cannot be birational under these assumptions and therefore has to be a morphism of fiber type with target  $Y=g(E)\cong \bP^{n-1}$. Then Lemma \ref{ruled} implies that $\hX$ is a $\bP^1$-bundle over $\bP^{n-1}$, thus $X\cong \bP^n$. The proof of Lemma \ref{birlocal} relies on a dimension reduction argument and Lemma \ref{surfacecont}. As an application of Theorem \ref{mainthm}, we show that $\bP^n$ is the only Ding-semistable $\bQ$-Fano variety of volume at least $(n+1)^n$ (see Theorem \ref{ding-pn}). This improves the equality case of \cite[Theorem 1.1]{fuj15} where Fujita proved for Ding-semistable Fano manifolds. 

In Section \ref{sec3}, we classify all $\bQ$-Fano varieties 
with Seshadri constants equal to $n$. By the same reason as
the proof of Theorem \ref{mainthm}, we still have that $D$ 
is semiample. We divide the classification into two parts.
In Section \ref{sec3.1}, we study cases when $g$ is 
birational. We show that $g|_E$ is a closed embedding,
$-(K_Y+g(E))$ is ample, $g(E)$ is nef (see Lemma \ref{lem:g(E)nef}). We 
classify such pairs $(Y,g(E))$ in Lemma \ref{lem:Y}. Then we obtain
the partial classification after a detailed study of the
structure of the birational morphism $g$ (see Lemma \ref{lem:blowup}
and \ref{lem:birational}). In Section 
\ref{sec3.2}, we study cases when $g$ is of fiber type.
It is not hard to see that every fiber of $g$ has dimension $1$, the generic fiber
of $g$ is isomorphic to $\bP^1$, $g|_E:E\to Y$ is a double
cover, and $-K_{\hX}$ is $g$-ample. After pulling
back $g$ to $E$ and taking the normalization, we obtain a 
conic bundle $\tg:\tX\to E\cong\bP^{n-1}$ with two sections
(see Lemma \ref{fiblocal2}, Corollary \ref{fiblocal} and Lemma \ref{basechange}). From 
the classification of the conic bundle $\tg$ and the quotient
map $g|_E$ (see Lemma \ref{disjoint} and \ref{lem:fibertype}), we finish the classification of $X$ and
hence prove Theorem \ref{thm:equality}. Finally in Section \ref{sec4}, we
provide examples showing that the Seshadri constant of a $\bQ$-Fano
variety can be any positive rational number less than $n$.

\subsection*{Acknowledgement}
We would like to thank our advisor J\'anos Koll\'ar for his constant
support, encouragement and numerous inspiring conversations. We would
like to thank Thomas Bauer, Pedro Montero, Tomasz Szemberg and Chenyang Xu for helpful comments. 
The first author also
wishes to thank Xiaowei Wang for useful discussions, and Kento Fujita
for his interest and encouragement. The first author is partially 
supported by NSF grants DMS-0968337 and DMS-1362960.

\section{Proof of Theorem \ref{mainthm}}\label{sec2}

\begin{lem}\label{surfacecont}
Let $\pi:S\rightarrow T$ be a proper birational morphism between normal
surfaces. Let $C\subset S$ be a $K_{S}$-negative $\pi$-exceptional
curve. Then $(-K_{S}\cdot C)\le 1$, with equality if and only if $S$ has only Du Val singularities
along $C$. \emph{(}Since $K_S$ is not necessarily $\bQ$-Cartier, we use the intersection theory of Weil divisors on surfaces by Mumford \cite{mumford}.\emph{)}
\end{lem}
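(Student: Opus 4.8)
The plan is to pull everything back to the minimal resolution $\mu:\widetilde{S}\to S$, where the intersection theory is the classical one on a smooth surface. First I would recall Mumford's numerical pullback: for any Weil divisor $B$ on $S$ there is a unique $\bQ$-divisor $\mu^*B=\widetilde{B}+(\text{exceptional})$ with $\mu^*B\cdot E_j=0$ for every $\mu$-exceptional curve $E_j$, and by definition $(B_1\cdot B_2)=(\mu^*B_1\cdot\mu^*B_2)$. Writing $K_{\widetilde{S}}=\mu^*K_S+\sum_i a_iE_i$, the minimality of $\mu$ gives $K_{\widetilde{S}}\cdot E_j\ge 0$ for all $j$; since $\mu^*K_S\cdot E_j=0$, this says $(\sum_i a_iE_i)\cdot E_j\ge 0$, so $F:=-\sum_i a_iE_i$ satisfies $F\cdot E_j\le 0$ for all $j$, and the negative definiteness of the exceptional intersection matrix forces $F\ge 0$. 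Thus all discrepancies $a_i\le 0$ and $\mu^*K_S=K_{\widetilde S}+F$ with $F$ an effective $\mu$-exceptional $\bQ$-divisor.

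Next, let $\widetilde{C}$ be the strict transform of $C$, an irreducible curve that is not $\mu$-exceptional. Since $\mu^*K_S$ is orthogonal to the exceptional curves, $(K_S\cdot C)=(\mu^*K_S\cdot\widetilde C)=(K_{\widetilde S}\cdot\widetilde C)+(F\cdot\widetilde C)$, hence
\[
(-K_S\cdot C)=-(K_{\widetilde S}\cdot\widetilde C)-(F\cdot\widetilde C).
\]
Because $F$ is effective and supported on exceptional curves while $\widetilde C$ is not a component of it, $(F\cdot\widetilde C)\ge 0$. On the smooth surface $\widetilde S$ I would then apply adjunction: the composite $\widetilde S\to S\to T$ is birational and contracts $\widetilde C$, so $\widetilde C^2\le -1$, and
\[
-(K_{\widetilde S}\cdot\widetilde C)=2-2p_a(\widetilde C)+\widetilde C^2\le 2+\widetilde C^2\le 1 .
\]
Combining the two displays gives $(-K_S\cdot C)\le 1$.

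For the equality statement, equality in the chain above forces simultaneously $(F\cdot\widetilde C)=0$ and $\widetilde C$ a $(-1)$-curve. The key local input is a dichotomy for the minimal resolution of a single normal surface singularity $p\in S$: letting $E_{p,1},\dots,E_{p,m}$ be the connected exceptional configuration over $p$, the coefficients of $F$ there are either all zero (the Du Val case) or all strictly positive. Indeed, if the zero set $A$ and the positive set $B$ of these coefficients were both nonempty, then for $j\in A$ one gets $F\cdot E_j\ge 0$ from the nonnegativity of the off-diagonal intersection numbers, which combined with $F\cdot E_j\le 0$ gives $E_k\cdot E_j=0$ for all $k\in B$, contradicting connectedness. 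Granting this, since $\widetilde C$ meets the exceptional locus over every singular point $p\in C$ and $(F\cdot\widetilde C)=0$ forces the coefficient of $F$ to vanish on each $E_{p,j}$ that $\widetilde C$ meets, the dichotomy propagates the vanishing to the whole configuration over $p$, so every such $p$ is Du Val; the smooth points of $C$ are trivially fine. Conversely, if $S$ is Du Val along $C$ then $F$ has zero coefficient on every exceptional curve over a point of $C$, whence $(F\cdot\widetilde C)=0$ and $K_S$ is Cartier near $C$; thus $(-K_S\cdot C)=-(K_{\widetilde S}\cdot\widetilde C)$ is an integer, which being positive and at most $1$ must equal $1$.

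I expect the equality case to be the main obstacle, and within it the dichotomy above, which substitutes for the usual ``all discrepancies vanish'' description of Du Val points and is exactly what lets a single meeting of $\widetilde C$ with a zero-discrepancy exceptional curve propagate to Du Val-ness of the entire singularity; the remaining difficulty is purely bookkeeping in Mumford's intersection theory, needed precisely because $K_S$ is not assumed $\bQ$-Cartier.
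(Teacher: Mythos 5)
Your proof is correct and takes essentially the same route as the paper's: pass to the minimal resolution, use effectivity of the codiscrepancy divisor $F=\mu^*K_S-K_{\widetilde{S}}$, bound $(-K_S\cdot\widetilde{C})$ by adjunction on $\widetilde{S}$, and settle the equality case by the negativity/connectedness argument. The only cosmetic differences are that you prove explicitly what the paper cites as the negativity lemma (both the effectivity of $F$ and the zero-or-all-positive dichotomy over each singular point), and you obtain the inequality directly from adjunction instead of first showing $\widetilde{C}$ is a $(-1)$-curve.
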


\begin{proof}
Let $\phi:\tilde{S}\rightarrow S$ be the minimal resolution of $S$.
Denote the exceptional curves of $\phi$ by $E_{i}$. Then we have
\[
K_{\tilde{S}}+\sum_i a_{i}E_{i}\equiv\phi^{*}K_{S},\quad\textrm{where }a_{i}\ge 0.
\]
Let $\tilde{C}$ be the birational transform of $C$ under $\phi$.
Since $\pi\circ\phi$ contracts $\tilde{C}$, we have $(\tilde{C}^{2})<0$.
By the assumption that $C$ is $K_S$-negative, we have
\[
(K_{\tilde{S}}\cdot\tilde{C})=(\phi^{*}K_{S}\cdot\tilde{C})-\sum_i a_{i}(E_{i}\cdot\tilde{C})\le(K_{S}\cdot C)<0.
\]
Hence $\tilde{C}$ is a $(-1)$-curve on $\tilde{S}$ and $(-K_{S}\cdot C)\le(-K_{\tilde{S}}\cdot\tilde{C})=1$.

It is clear that $(-K_{S}\cdot C)=1$ if and only if $\sum_i a_{i}(E_{i}\cdot\tilde{C})=0$, i.e. $a_{i}=0$
whenever $\tilde{C}$ intersects $E_{i}$. By the negativity lemma (cf. \cite[Lemma 3.41]{km98}), this is equivalent to
saying that $a_i=0$ whenever $E_i$ is connected to $\tilde{C}$ through a chain of $\phi$-exceptional curves. Thus
the equality holds if and only if $S$ has Du Val singularities along $C$.
\end{proof}

\begin{lem}\label{ruled}
Let $\pi:S\to T$ be a proper surjective morphism from a normal  surface $S$ to a smooth curve $T$. Assume that the generic fiber of $\pi$ is isomorphic to $\bP^1$, and all fibers of $\pi$ are generically reduced and irreducible. Then $\pi$ is a smooth $\bP^1$-fibration, i.e. $S$ is a geometrically ruled surface over $T$.
\end{lem}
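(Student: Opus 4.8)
The plan is to prove that every fiber of $\pi$ is a smooth rational curve and then to assemble these fibers into a ruled structure. The first step I would take is to establish that $\pi$ is flat. Since $S$ is a normal surface it is Cohen--Macaulay (by Serre's criterion, normality gives $S_2$, which in dimension two is Cohen--Macaulayness), while $T$ is regular. Because $\pi$ is a surjective morphism between irreducible varieties of dimensions $2$ and $1$, every nonempty fiber has pure dimension $1$: the lower bound $\ge \dim S-\dim T=1$ comes from the fiber-dimension theorem, and the upper bound holds since no fiber can equal the irreducible surface $S$. By the local criterion for flatness (``miracle flatness''), a morphism from a Cohen--Macaulay scheme to a regular scheme with equidimensional fibers is flat, so $\pi$ is flat.

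Next I would exploit the constancy of the arithmetic genus in a flat family. The Euler characteristic $\chi(\cO_{S_t})$ is locally constant on $T$, and the generic fiber is $\bP^1$ with $\chi=1$; hence $p_a(S_t)=1-\chi(\cO_{S_t})=0$ for every $t\in T$. I also need each fiber to be reduced, not merely generically reduced: the fiber $S_t$ is a Cartier divisor on $S$ (the zero scheme of the pullback of a local coordinate on $T$), hence Cohen--Macaulay and in particular $S_1$; combined with the hypothesis that $S_t$ is generically reduced (i.e. $R_0$), Serre's criterion $R_0+S_1$ shows that $S_t$ is reduced. Together with the assumed irreducibility, each $S_t$ is an integral projective curve of arithmetic genus $0$.

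Then I would invoke the elementary fact that an integral projective curve $C$ over $\bC$ with $p_a(C)=0$ is isomorphic to $\bP^1$: writing the normalization sequence $0\to\cO_C\to\nu_*\cO_{\tilde C}\to\mathcal{Q}\to 0$ for $\nu\colon\tilde C\to C$, one reads off $p_a(C)=g(\tilde C)+\operatorname{length}(\mathcal{Q})$, so $p_a(C)=0$ forces both the geometric genus and the $\delta$-invariant to vanish; thus $C$ is smooth of genus $0$, i.e. $C\cong\bP^1$. Hence every fiber of $\pi$ is a smooth $\bP^1$, and since a flat morphism of finite type with smooth fibers is smooth, $\pi$ is a smooth proper morphism onto a smooth curve with all fibers $\bP^1$; in particular $S$ is smooth. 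Finally, Tsen's theorem gives a rational point on the generic fiber (a smooth conic over $K(T)$), which spreads out to a section of $\pi$ by properness, and a $\bP^1$-fibration admitting a section is geometrically ruled (Noether--Enriques); this completes the argument.

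The step I expect to be the main obstacle is ruling out singular or non-reduced fibers: a priori the irreducible fiber $S_t$ could be a singular rational curve, such as a nodal or cuspidal $\bP^1$, or could carry a non-reduced scheme structure, either of which would destroy smoothness of $\pi$. The crucial input that excludes both pathologies is the normality of $S$, which through Cohen--Macaulayness simultaneously yields flatness (hence constancy of the arithmetic genus) and reducedness of the fibers; once these are in place, everything else is formal.
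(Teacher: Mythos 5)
Your proof is correct and follows essentially the same route as the paper's: establish flatness (which the paper declares ``clear'' and you justify via miracle flatness), use constancy of $\chi(\cO_{S_t})$ in the flat family, use normality of $S$ to show the Cartier-divisor fibers have no embedded points (your $R_0+S_1$ Serre-criterion argument), and conclude that each fiber is an integral curve of arithmetic genus $0$, hence $\bP^1$. Your closing step on Tsen's theorem, sections and local triviality is extra detail the paper subsumes into the definition of a smooth $\bP^1$-fibration, but it is harmless.
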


\begin{proof}
 For any closed point $t\in T$, denote by $S_t$ the scheme-theoretic fiber of $\pi$ at $t$.
 It is clear that $\pi$ is flat, so $\chi(S_t,\cO_{S_t})=\chi(\bP^1,\cO_{\bP^1})=1$. Besides,
 $S$ being normal implies that the Cartier divisor $S_t$ on $S$ has no embedded points.
 Then $S_t$ being generically reduced and irreducible yields that $S_t$ is an integral curve.
 Therefore, $S_t\cong\bP^1$.
\end{proof}

\begin{emp}[\emph{Proof of Theorem \ref{mainthm}}]
 Denote by $\sigma:\hX=\mathrm{Bl}_p X\to X$ the blow up of $X$ at $p$ with
 exceptional divisor $E$. Let $D:=\sigma^*(-K_X)-\epsilon(-K_X,p)E$
 be the nef divisor. Since $-K_{\hX}=\sigma^*(-K_X)-(n-1)E$, we know
 that $D-K_{\hX}$ is ample. Hence Shokurov's basepoint-free theorem \cite[Theorem 3.3]{km98}
 implies that $D$ is semiample. 
 
 Let $g:\hX\to Y$ be the ample model of $D$ (i.e. $g$ is the morphism determined by the complete linear system $|kD|$ for some $k\gg 0$). Let $m$ be a positive
 integer such that $mD$ is Cartier. 
 Notice that $mD-E-K_{\hX}$ is ample by 
 $\epsilon(-K_X,p)>n$, so Kawamata-Viehweg vanishing implies that
 $H^1(\hX, mD-E)=0$. Hence $H^0(\hX, mD)\to H^0(E, mD|_E)$ is 
 surjective for $m\in\bZ_{>0}$
 with $mD$ being Cartier. As a result, $g|_E:E\to Y$ is a closed 
 embedding. Thus any curve $C$ contracted by $g$ is not contained in $E$, 
 which implies that $(C\cdot \sigma^*(-K_X))>0$. 
 Since $0=(C\cdot D)=(C\cdot\sigma^*(-K_X))-\epsilon(-K_X,p)(C\cdot E)$, we know that
 $(C\cdot E)>0$.
 
 Suppose $g$ contracts $C$ to a point $y\in Y$. Consider the scheme-theoretic fiber $g^{-1}(y)$ of $g$.
 Since $g|_E$ is a closed embedding, the scheme-theoretic intersection $E\cap g^{-1}(y)$ is a reduced closed point,
 say $q$. If there is another curve $C'\neq C$ contained in $g^{-1}(y)$, then $E\cap g^{-1}(y)$ has multiplicity at least
 $2$ at $q$, a contradiciton! So $\Supp g^{-1}(y)=C$ and $g^{-1}(y)$ is smooth and transversal to $E$ at $q$. In particular,
 we have $(C\cdot E)=1$ for any curve $C$ contracted by $g$. Since $\hat{X}$ has klt singularities, it is Cohen-Macaulay by \cite[Theorem 5.22]{km98}. In addition we have $-K_{\hX}\sim_{g.\bQ.}\lambda E$ where $\lambda=\epsilon(-K_X,p)-n+1>1$. Hence by the following lemma, $g$ cannot be birational.
 
\begin{lem}\label{birlocal}
Let $g:\hat{X}\rightarrow Y$ be a proper birational morphism between
quasi-projective normal varieties and $E$ a smooth $g$-ample Cartier
divisor on $\hat{X}$ such that $-K_{\hat{X}}\sim_{g.\mathbb{Q}.}\lambda E$
for some $\lambda\ge1$. Assume that $\hat{X}$ is Cohen-Macaulay
and $g|_{E}:E\rightarrow G=g(E)$ is an isomorphism, then $\lambda=1$ and
$Y$ is smooth along $G$.
\end{lem}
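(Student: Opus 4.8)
The plan is to localize on $Y$ around a point $y\in G$, reduce to the case of surfaces by cutting with general hyperplanes, apply Lemma \ref{surfacecont} to obtain $\lambda=1$, and then deduce smoothness of $Y$ from a crepant/adjunction computation. Since the relation $-K_{\hX}\sim_{g,\bQ}\lambda E$ only constrains $\lambda$ through curves contracted by $g$, I may assume $g$ is not an isomorphism (the case of interest); where $g$ is an isomorphism, $E\cong G$ is a smooth Cartier divisor on $Y$, and a smooth Cartier divisor through a point forces the ambient local ring to be regular there, so $Y$ is automatically smooth along that part of $G$. First I would record two observations. As $g|_E$ is an isomorphism, for $y\in G$ the scheme $E\cap g^{-1}(y)=(g|_E)^{-1}(y)$ is a single reduced point $q$; since $E$ is $g$-ample, $E$ restricts to an ample Cartier divisor on each fiber, and an ample divisor on a projective scheme of dimension $\ge1$ is never zero-dimensional, so every fiber of $g$ is at most a curve. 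Also, for any curve $C$ contracted by $g$ we have $(E\cdot C)\ge1$ (it is positive and $E$ is Cartier), whence $(-K_{\hX}\cdot C)=\lambda(E\cdot C)\ge\lambda\ge1$.

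Fix $y\in G$ lying under a $1$-dimensional fiber $F=g^{-1}(y)$. Choosing $n-2$ general members $H_1,\dots,H_{n-2}$ of a very ample linear system on $Y$ through $y$ and setting $T=H_1\cap\cdots\cap H_{n-2}$, $S=g^{-1}(T)$, I obtain surfaces with $y\in T$ and $F\subseteq S$, and $g|_S\colon S\to T$ birational. The essential point that makes Bertini applicable is the first observation: the base locus of the relevant subsystems is contained in $F$ (on $\hX$) and in $\{y\}$ (on $Y$), which has codimension $\ge2$ at every stage of the reduction because the fibers are curves; hence general members are normal, and since $\hX$ is Cohen–Macaulay the slices $S$ stay normal Cohen–Macaulay surfaces. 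One then checks that all hypotheses descend: $E_S:=E|_S$ is a smooth $g|_S$-ample Cartier curve mapping isomorphically onto $G_T:=G\cap T$, and by adjunction $K_S\sim_{g|_S,\bQ}K_{\hX}|_S\sim_{g|_S,\bQ}-\lambda E_S$, the correction terms $(g^*H_i)|_S$ being pullbacks from $T$. Finally, because $T$ is the intersection of general hyperplanes through $y$, one has $\dim_k\fm_{T,y}/\fm_{T,y}^2=\dim_k\fm_{Y,y}/\fm_{Y,y}^2-(n-2)$, so $T$ is smooth at $y$ if and only if $Y$ is; thus it suffices to treat the case $n=2$.

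Assume now $n=2$. Let $C$ be a component of $F$; then $(-K_S\cdot C)=\lambda(E_S\cdot C)>0$, so $C$ is a $K_S$-negative $g|_S$-exceptional curve and Lemma \ref{surfacecont} gives $\lambda(E_S\cdot C)\le1$. With $\lambda\ge1$ and $(E_S\cdot C)\ge1$ this forces $\lambda=1$ and $(E_S\cdot C)=1$, and the equality case of Lemma \ref{surfacecont} shows $S$ has only Du Val singularities along $C$; the single-point argument from the proof of Theorem \ref{mainthm} shows $F=C$ is irreducible, met transversally by $E_S$ at the smooth point $q$. It remains to prove $T$ is smooth along $G_T$. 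Since $\lambda=1$, $K_S+E_S\sim_{g|_S,\bQ}0$; pushing forward shows $K_T+G_T$ is $\bQ$-Cartier and $K_S+E_S=(g|_S)^*(K_T+G_T)$, so $g|_S$ is crepant. Because $S$ is Du Val along $C$ and $E_S$ meets the fiber only at the smooth point $q$, the pair $(S,E_S)$ is plt near $C$, hence so is $(T,G_T)$. Adjunction gives $(K_T+G_T)|_{G_T}=K_{G_T}+\mathrm{Diff}_{G_T}(0)$; restricting the crepant identity to $E_S$, where $(K_S+E_S)|_{E_S}=K_{E_S}$ (as $E_S$ is a smooth Cartier divisor, $S$ is smooth along it) and $g|_{E_S}$ is an isomorphism, yields $\mathrm{Diff}_{G_T}(0)=0$. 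For a plt surface pair the different of the boundary curve vanishes precisely when the surface is smooth along that curve (see, e.g., \cite{km98}), so $T$ is smooth along $G_T$, in particular at $y$.

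Combining the two cases, $\lambda=1$ and $Y$ is smooth at every point of $G$, which proves the lemma. I expect the main obstacle to be the final smoothness assertion: deducing $\lambda=1$ from Lemma \ref{surfacecont} is immediate, but upgrading ``$S$ Du Val along $C$'' to ``$T$ (hence $Y$) smooth along $G$'' requires the crepant-descent and vanishing-of-the-different computation above (equivalently, checking that the fiber of a resolution contracts to a smooth point). A secondary technical issue is the Bertini/normality bookkeeping in the dimension reduction, which is exactly where the Cohen–Macaulay hypothesis and the one-dimensionality of the fibers are used.
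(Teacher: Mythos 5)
Your skeleton is the same as the paper's (cut with $n-2$ general hyperplanes through $y$, reduce to a birational contraction of surfaces, apply Lemma \ref{surfacecont} to get $\lambda=1$ and Du Val singularities, then conclude smoothness of the target), but there is a genuine gap at the step to which the paper's proof devotes most of its effort: normality of the surface slice $S$. You justify it by claiming that the base locus of the subsystems used in the reduction ``has codimension $\ge 2$ at every stage,'' and this is false at the final stage: the base locus on the $\hX$-side is the whole fiber $F=g^{-1}(y)$, which is a curve and hence a \emph{divisor} in the surface $S$. Bertini-type theorems say nothing along the base locus, and $\hX$ itself may well be singular along all of $F$ (its singular locus is only known to have codimension $\ge 2$, while $F$ has codimension $n-1$), so nothing you have written rules out $S$ being singular, or non-normal, along the entire curve $F$. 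Without normality of $S$ you cannot invoke Lemma \ref{surfacecont}, nor make sense of the computation $(-K_S\cdot C)=\lambda(E_S\cdot C)$. The paper closes exactly this hole: since $g|_E$ is an isomorphism and the $H_i$ are general through $y$, the scheme-theoretic curve $E\cap S\cong G\cap H_1\cap\cdots\cap H_{n-2}$ is smooth at $q$; because $E$ is \emph{Cartier}, smoothness of the Cartier divisor $E\cap S\subset S$ at $q$ forces $S$ itself to be smooth at $q$; and since $C$ is irreducible, the closed singular locus of $S$ then meets $C$ in a finite set, which combined with $S_2$ (from $\hX$ Cohen--Macaulay) and Bertini away from $C$ gives $R_1$ and hence normality. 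You have these ingredients in your write-up ($E_S$ smooth, Cartier, transversality at $q$), but you deploy them only after normality has already been assumed; the logical order must be reversed.

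The remainder of your argument is sound and diverges from the paper only at the very end, where it is also heavier than necessary. The paper concludes by quoting that contracting a curve of anticanonical degree one on a surface with Du Val singularities produces a smooth point; you instead pass through plt pairs and the different. That route works, but note that $\mathrm{Diff}_{G_T}(0)=0$ cannot be deduced from a divisor-class identity alone: after localizing near $y$ the curve $G_T$ is not proper, and an effective divisor that is $\bQ$-linearly trivial on a non-proper curve need not vanish. What you actually need is the divisor-level compatibility of the different with crepant pullback (Koll\'ar, \emph{Singularities of the Minimal Model Program}, \S 4), together with the classification of plt surface pairs (different zero if and only if the surface is smooth along the boundary curve). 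Both are standard, so your ending is acceptable as a citation; the normality gap above is the item that must be repaired.
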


\begin{proof}
Let $H$ be a very ample divisor on $Y$ such that $H^{0}(Y,\mathcal{O}_{Y}(H))\rightarrow H^{0}(G,\mathcal{O}_{G}(H))$
is surjectve. Let $y\in Y$ be a closed point in the exceptional locus
of $g$ and let $H_{1},\cdots,H_{n-2}$ be general members of $|H|$
containing $y$. Let $C=g^{-1}(y)$ and $S=g^{*}H_{1}\cap\cdots\cap g^{*}H_{n-2}$.
We claim that $S$ is a normal surface. Since $E|_{C}$ is ample and
$g|_{E}$ is an isomorphism, it is easy to see as above that $C$ is an irreducible curve and $E\cap C$
is supported at a single point $q$. As $\hat{X}$ is Cohen-Macaulay,
$S$ is $S_{2}$. By Bertini's theorem $S\backslash C$ is smooth
in codimension one and $G\cap H_{1}\cap\cdots\cap H_{n-2}$ (scheme-theoretic
intersection) is smooth at $y$. It follows that $E|_{S}$ is smooth
at $q$. Since $E$ is Cartier, we see that $S$ is also smooth at
$q\in C$, hence $S$ is smooth in codimension one and it is normal.

It is clear that $g|_{S}$ is a birational morphism that contracts
$C$. By adjuction $K_{S}=(K_{X}+g^{*}H_{1}+\cdots+g^{*}H_{n-2})|_{S}$,
thus $(-K_{S}\cdot C)=(-K_{\hat{X}}\cdot C)=\lambda(E\cdot C)=\lambda\ge1$. On the
other hand by Lemma \ref{surfacecont} we have $(-K_{S}\cdot C)\le1$. Hence $\lambda=(-K_{S}\cdot C)=1$
and $S$ has only Du Val singularities along $C$. Since contracting
a $(-1)$-curve (i.e. a curve that has anti-canonical degree 1) from
a surface with Du Val singularities produces a smooth point, $g(S)$
and hence $Y$ is smooth at $y$. Note that $y$ is arbitrary in the
exceptional locus, so $Y$ is smooth along $G$.
\end{proof}

\begin{rem}
 In fact more is true. Under the same assumptions of the lemma, $\hX$ is indeed the blowup of $Y$ along a divisor in $G$. We postpone its proof to the next section.
\end{rem}

Returning to the proof of Theorem \ref{mainthm}, we see that $g$ has to be a fiber type contraction. Since $g|_E$ is a closed embedding, we know that $g|_E:E\to Y$ is in fact an isomorphism. In particular, $E\cong Y\cong\bP^{n-1}$.
Let us define $S$, $H_i$ as in the proof of Lemma \ref{birlocal}. By the same argument there, $S$ is a normal surface. Since the singular set of $\hX$ has codimension at least $2$, by generic smoothness we know that the generic fiber of $g:\hX\to Y$ is smooth. So the contraction $g$ being $K_{\hX}$-negative implies that the generic fiber of $g$ is a smooth rational curve. In particular, the generic fiber of $g|_S: S\to g(S)$ is isomorphic to $\bP^1$. Hence applying Lemma \ref{ruled} yields that $C\cong\bP^1$, which means that $g:\hX\to Y$ is a smooth $\bP^1$-fibration. 

It is clear that $s=g|_E^{-1}:Y\to E$ gives a section of $g$, thus $\hX=\bP_Y(\cE)$ is a $\bP^1$-bundle where $\cE$ is a rank $2$ vector bundle over $Y$. Then the section $E$ corresponds to a surjection $\cE\twoheadrightarrow\cN$ for some line bundle $\cN$ on $Y$. Denote the kernel of this surjection by $\cM$.
By the adjunction formula on $\bP^1$-bundles, we know that $\cO_Y(-1)\cong s^*N_{E/\hX}\cong \cM^{-1}\otimes\cN$. For simplicity we may assume $\cM\cong\cO_Y$,
then we get $\cN\cong\cO_Y(-1)$ and hence a short exact sequence
\[
 0 \to \cO_Y \to \cE \to \cO_Y(-1) \to 0.
\]
Since $\Ext^1(\cO_Y(-1),\cO_Y)\cong H^1(\bP^{n-1},\cO(1))=0$, the above exact sequence splits. So $\cE\cong\cO_Y\oplus\cO_Y(-1)$ and $E$ corresponds to the second projection $\cO_Y\oplus\cO_Y(-1)\twoheadrightarrow\cO_Y(-1)$. As a result,
$\hX$ is isomorphic to the blow up of $\bP^n$ at one point with $E$ corresponding to the exceptional divisor. Therefore, $X\cong\bP^n$.\qed
\end{emp}

The following is an application of Theorem \ref{mainthm} to Ding-semistable $\bQ$-Fano varieties with maximal volume (see \cite{fuj15, liu16} for backgrounds). This improves Fujita's result on the equality case in \cite[Theorem 5.1]{fuj15}. We remark that a different proof is presented in \cite[Proof 2 of Theorem 36]{liu16}.

\begin{thm}\label{ding-pn}
 Let $X$ be a Ding-semistable $\bQ$-Fano variety of dimension $n$. If $((-K_X)^n)\geq (n+1)^n$, then $X\cong\bP^n$.
\end{thm}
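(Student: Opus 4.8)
The plan is to deduce Theorem \ref{ding-pn} from Theorem \ref{mainthm} by establishing a lower bound on the Seshadri constant $\epsilon_p(-K_X)$ at a general smooth point $p$, coming from the volume hypothesis together with Ding-semistability. The key point is that Ding-semistability controls the singularities of anticanonical $\bQ$-divisors, and a large Seshadri constant is exactly what forces $X$ to be projective space. So the whole proof reduces to showing that $((-K_X)^n)\geq (n+1)^n$ plus Ding-semistability implies $\epsilon_p(-K_X)>n$ for some smooth point, unless we are already in a degenerate situation that we can rule out.

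Concretely, first I would recall the link between Seshadri constants and log canonical thresholds / volumes. For a smooth point $p$, one always has a volume estimate of the form $\epsilon_p(-K_X)^n \leq ((-K_X)^n)$, coming from the fact that $\sigma^*(-K_X)-\epsilon_p(-K_X)E$ is nef and hence has nonnegative top self-intersection on $\hX=\mathrm{Bl}_p X$; expanding $(\sigma^*(-K_X)-\epsilon E)^n = ((-K_X)^n) - \epsilon^n \geq 0$ gives $\epsilon_p(-K_X)\leq ((-K_X)^n)^{1/n}$. Under the hypothesis this only yields $\epsilon_p(-K_X)\leq$ something $\geq n+1$, so the inequality goes the wrong way for a direct application; the volume bound alone is an upper bound on $\epsilon_p$, not a lower one. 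This is why Ding-semistability must enter: it should be used to produce a \emph{lower} bound on $\epsilon_p(-K_X)$, or equivalently to bound the stability threshold $\delta(X)$ (or $\beta$-invariants of divisorial valuations) from below, and then relate that threshold to the Seshadri constant via the valuation given by ordinary vanishing at $p$.

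The main technical step, and the place I expect the real obstacle to lie, is translating the Ding-semistability condition into a statement about $\epsilon_p$. The natural approach is to test Ding-semistability against the filtration (or the divisorial valuation) associated to the blow-up $\sigma:\hX\to X$ at $p$, whose exceptional divisor $E$ has log discrepancy $A_X(E)=n$ since $p$ is smooth. Ding-semistability gives $\beta(E)=A_X(E)-S(E)\geq 0$, i.e. $n\geq S(E)$, where $S(E)$ is the expected vanishing order along $E$, computed as
\[
S(E)=\frac{1}{((-K_X)^n)}\int_0^{\tau}\mathrm{vol}(\sigma^*(-K_X)-tE)\,dt,
\]
with $\tau=\epsilon_p(-K_X)$ the pseudoeffective (here nef) threshold. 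The volume integrand is $((-K_X)^n)-t^n$ for $t\in[0,\epsilon_p]$, so one computes $S(E)=\epsilon_p - \frac{\epsilon_p^{\,n+1}}{(n+1)((-K_X)^n)}$. Plugging in the hypothesis $((-K_X)^n)\geq(n+1)^n$ and combining with $n\geq S(E)$ should force $\epsilon_p(-K_X)>n$, unless $\epsilon_p=n$ exactly, in which case the equality analysis pins down $((-K_X)^n)=(n+1)^n$ and one must either invoke strict Ding-\emph{semi}stability versus stability or directly identify $X$. The delicate part is justifying the exact value of the volume function for all $t\in[0,\epsilon_p]$ and handling the boundary case sharply, which is where I would expect to spend the most care.

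Finally, once $\epsilon_p(-K_X)>n$ is established for a general smooth point $p$, Theorem \ref{mainthm} applies verbatim to conclude $X\cong\bP^n$, and a direct check confirms $\bP^n$ is Ding-semistable with $((-K_{\bP^n})^n)=(n+1)^n$, so the bound is sharp and the characterization is complete.
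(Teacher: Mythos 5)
Your high-level plan---extract a Seshadri bound from Ding-semistability and then quote Theorem \ref{mainthm}---is exactly the paper's strategy, but the paper obtains both intermediate facts by citation: $((-K_X)^n)\le (n+1)^n$ from \cite[Corollary 1.3]{fuj15} (forcing equality under the hypothesis), and then $\epsilon_p(-K_X)=n+1$ from Fujita's proof of \cite[Theorem 5.1]{fuj15}. You instead try to reprove these via $\beta(E)=A_X(E)-S(E)\ge 0$, and there your central deduction is backwards. With your formula $S(E)=\epsilon_p-\frac{\epsilon_p^{n+1}}{(n+1)V}$, where $V=((-K_X)^n)$ (obtained by integrating the volume only over the nef range $[0,\epsilon_p]$), the right-hand side is an \emph{increasing} function of $\epsilon_p$ on $[0,V^{1/n}]$, so the constraint $n\ge S(E)$ yields an \emph{upper} bound $\epsilon_p\le n+1$ when $V=(n+1)^n$, and it is satisfied vacuously by small Seshadri constants: for instance $\epsilon_p=1$ gives $S(E)<1\le n$ no matter how large $V$ is. Hence ``combining with $n\ge S(E)$'' cannot force $\epsilon_p>n$. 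You have also conflated two thresholds: the integral defining $S(E)$ runs to the pseudo-effective threshold $\tau$, which in general strictly exceeds the nef threshold $\epsilon_p$, and on $[\epsilon_p,\tau]$ the volume is no longer $V-t^n$.

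The correct use of the valuation $E$ is the opposite one: since vanishing to order $tm$ at a smooth point imposes at most $\binom{\lceil tm\rceil+n-1}{n}$ conditions, one has $\mathrm{vol}(\sigma^*(-K_X)-tE)\ge V-t^n$ for \emph{all} $t$, hence $\tau\ge V^{1/n}$ and $S(E)\ge\frac{1}{V}\int_0^{V^{1/n}}(V-t^n)\,dt=\frac{n}{n+1}V^{1/n}$; combined with $S(E)\le n$ this gives $V\le (n+1)^n$, i.e.\ it reproves Fujita's volume bound, and under your hypothesis all inequalities become equalities, so $\mathrm{vol}(\sigma^*(-K_X)-tE)=\max(V-t^n,0)$ for every $t$. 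But the step you still need---that this volume identity implies nefness of $\sigma^*(-K_X)-tE$ up to $t=n+1$, i.e.\ $\epsilon_p(-K_X)=n+1$---is a genuine additional argument: equality of the volume with the top self-intersection number does not imply nefness in dimension $\ge 3$, and this implication is precisely the content of the passage in Fujita's proof that the paper cites. Your proposal flags this as ``the delicate part'' but supplies neither the argument nor a reference for it, so the proof is incomplete at exactly the crucial point.
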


\begin{proof}
 Notice that $((-K_X)^n)\leq (n+1)^n$
 by \cite[Corollary 1.3]{fuj15}. Thus we have $((-K_X)^n)=(n+1)^n$.
 Let $p\in X$ be a smooth point. From \cite[Proof of 5.1]{fuj15}, we see that $\epsilon(-K_X,p)=n+1$. Hence $X\cong\bP^n$ by Theorem \ref{mainthm}.
\end{proof}

\section{Equality case}\label{sec3}

In this section we prove Theorem \ref{thm:equality}. Let $X$ be an $n$-dimensional $\bQ$-Fano variety with a smooth point $p\in X$.
Assume $\epsilon(-K_X,p)=n$. Following the proof of Theorem \ref{mainthm}, we have
that $D=\sigma^*(-K_X)-n E$ is semiample on $\hX$ and induces the morphism $g:\hX\to Y$. We now separate into two cases base on different behavior of $g$.

\subsection{\texorpdfstring{$g$}{g} is birational}\label{sec3.1}

\begin{lem} \label{lem:g(E)nef}
If $g:\hat{X}\rightarrow Y$ is birational, then $g|_E$ is a closed embedding, $-(K_{Y}+g(E))$ is ample and $g(E)\cong\bP^{n-1}$ is a nef divisor in the smooth locus of $Y$. Moreover, $Y$ is a $\bQ$-Fano variety.
\end{lem}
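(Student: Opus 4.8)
The plan is to prove each of the four assertions of Lemma~\ref{lem:g(E)nef} in turn, reusing as much of the setup from the proof of Theorem~\ref{mainthm} as possible, since the initial configuration is identical.

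\textbf{Step 1: $g|_E$ is a closed embedding.} I would observe that the argument from the proof of Theorem~\ref{mainthm} goes through verbatim in the equality case $\epsilon_p(-K_X)=n$. Indeed, $mD-E-K_{\hat X}=(mn-1)\sigma^*(-K_X)-(mn-n)E$ is still ample for $m$ sufficiently large (the coefficient of $\sigma^*(-K_X)$ dominates), so Kawamata--Viehweg vanishing gives $H^1(\hat X, mD-E)=0$, hence the restriction $H^0(\hat X, mD)\to H^0(E, mD|_E)$ is surjective and $g|_E$ is a closed embedding. Since $D|_E=(\sigma^*(-K_X)-nE)|_E=n(-E|_E)=n\cdot\cO_E(1)$ on $E\cong\bP^{n-1}$, the divisor $mD|_E$ is a multiple of the hyperplane class, so $g$ embeds $E$ as a linearly normal copy of $\bP^{n-1}$; thus $g(E)\cong\bP^{n-1}$.

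\textbf{Step 2: $g(E)$ lies in the smooth locus of $Y$ and is nef.} This is exactly where the Remark following Lemma~\ref{birlocal} is designed to help, but to stay self-contained I would instead invoke Lemma~\ref{birlocal} directly. With $\lambda=\epsilon_p(-K_X)-n+1=1$ and $E$ a $g$-ample smooth Cartier divisor with $-K_{\hat X}\sim_{g.\bQ.}E$, and $\hat X$ Cohen--Macaulay (klt implies CM), Lemma~\ref{birlocal} applies and tells us that $Y$ is smooth along $G=g(E)$. To see that $g(E)$ is nef, I would use that $E$ is nef on $\hat X$: since $D$ is nef and $\sigma^*(-K_X)$ is nef (pullback of an ample divisor), and $E=\tfrac1n(\sigma^*(-K_X)-D)$, one checks $E\cdot C\ge 0$ for curves $C$ not in $E$ using that such curves satisfy $E\cdot C=\tfrac1n D\cdot C\ge 0$; I expect $g(E)=(g_*E)$ to inherit nefness because $g$ is birational and the pushforward of a nef divisor that is the strict transform stays nef away from the (codimension $\ge 2$) exceptional locus — this is the point I would check most carefully.

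\textbf{Step 3: $-(K_Y+g(E))$ is ample and $Y$ is $\bQ$-Fano.} The key relation is $-K_{\hat X}=\sigma^*(-K_X)-(n-1)E$ and $D=\sigma^*(-K_X)-nE$, so $-K_{\hat X}=D+E$, giving $-K_{\hat X}-E=D=g^*(\text{ample})$ up to the semiample structure. Since $g$ is birational and contracts only curves meeting $E$ with $E\cdot C=1$, I would push forward the identity $-(K_{\hat X})-E=g^*(\text{ample on }Y)$ and use that $g_*(-K_{\hat X})=-K_Y$ (as $g$ is crepant-type here, with discrepancy governed by $\lambda=1$) together with $g_*E=g(E)$ to deduce $-(K_Y+g(E))$ is the pushforward of an ample class, hence ample. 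The $\bQ$-Fano property of $Y$ then follows since $-K_Y=-(K_Y+g(E))+g(E)$ is a sum of an ample and a nef divisor, hence ample, and $Y$ has klt singularities because $\hat X$ does and $g$ is crepant ($\lambda=1$ means $K_{\hat X}=g^*K_Y$).

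\textbf{Main obstacle.} The hardest part will be Step~2, namely verifying that $g(E)$ is genuinely \emph{nef} as a Weil (or $\bQ$-Cartier) divisor on $Y$, not merely that $E$ is nef upstairs. Nefness is not preserved under pushforward in general, so I would need to argue using the precise local structure from Lemma~\ref{birlocal} — that $g$ resolves $Y$ to first order along $G$ with $\hat X$ the blow-up of a divisor in $G$ (as the Remark asserts) — to control intersection numbers $g(E)\cdot C'$ for arbitrary curves $C'\subset Y$. I would reduce to curves $C'$ meeting $G$ and lift them to $\hat X$, comparing $g(E)\cdot C'$ with $E\cdot\tilde C'$ via the projection formula, handling the correction terms coming from $C'$ passing through the (smooth, by Step~2) locus $G$.
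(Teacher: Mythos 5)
The central gap is in your Step 2, the nefness of $g(E)$: both pillars of your proposed argument are false. First, $E$ is \emph{not} nef on $\hX$: since $\cO_{\hX}(E)|_E\cong\cO_E(-1)$, every line $\ell\subseteq E\cong\bP^{n-1}$ has $(E\cdot\ell)=-1$. (Your formula $E\cdot C=\tfrac1n D\cdot C$ is also wrong — it should be $\tfrac1n\bigl(\sigma^*(-K_X)-D\bigr)\cdot C$ — but for curves $C\not\subseteq E$ non-negativity is trivial anyway; the problematic curves are exactly those inside $E$, which you never address.) Second, pushforward under a birational morphism does not preserve nefness, and here nefness of $g(E)$ cannot be ``inherited'' from upstairs at all: it is \emph{created} by the contraction. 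The paper's argument runs as follows. Since all lines in $g(E)\cong\bP^{n-1}$ are numerically equivalent, and curves not contained in $g(E)$ meet it non-negatively for trivial reasons, it suffices to test a single line $L$, which one may choose to meet $g(\Ex(g))$ (possible because every $g$-contracted curve meets $E$, so $g(\Ex(g))\subseteq g(E)$). Writing $g^*g(E)=E+W$ with $W$ effective Cartier and $g$-exceptional, the relation $-W\sim_{g.\bQ.}-K_{\hX}$ together with $g$-ampleness of $-K_{\hX}$ forces $\Ex(g)\subseteq\Supp W$; hence the strict transform $L'$ satisfies $(W\cdot L')\ge1$, and $(g(E)\cdot L)=(E\cdot L')+(W\cdot L')=-1+(W\cdot L')\ge0$. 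The exceptional correction $W$ is precisely what overcomes $(E\cdot L')=-1$; your plan to ``handle the correction terms'' gestures at this but supplies no mechanism producing the needed positivity, and without the containment $\Ex(g)\subseteq\Supp W$ the inequality simply fails.

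Two further errors. In Step 1 your computation is wrong: $mD-E-K_{\hX}=(m+1)\sigma^*(-K_X)-n(m+1)E=(m+1)D$, not $(mn-1)\sigma^*(-K_X)-(mn-n)E$, and this divisor is \emph{not} ample: $D$ induces the nontrivial contraction $g$, so it lies on the boundary of the nef cone. This is exactly the subtlety of the equality case $\epsilon_p(-K_X)=n$: one must invoke Kawamata--Viehweg vanishing for \emph{nef and big} divisors, and bigness of $D$ is where the birationality hypothesis of the lemma enters (this is what the paper does). Your arithmetic slip turns the slope $n$ into a slope strictly less than $n$ and thereby fabricates an ampleness that is not there. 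In Step 3, $\lambda=1$ does not give $K_{\hX}=g^*K_Y$; the correct crepancy statement is for the pair, $K_{\hX}+E=g^*\bigl(K_Y+g(E)\bigr)$, equivalently $K_{\hX}=g^*K_Y+W$ with $W\ge0$ exceptional — which still yields that $Y$ is klt (discrepancies can only increase), but not for the reason you state. Note also that your proof of ampleness of $-K_Y$ uses the nefness of $g(E)$, so the Step 2 gap propagates into the $\bQ$-Fano claim as well.
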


\begin{proof}
We see that $mD-E-K_{\hX}=(m-1)D$ is nef and big, so Kawamata-Viehweg vanishing implies that $g|_E:E\to Y$ is a closed embedding as in the proof of Theorem \ref{mainthm}. Hence $g(E)\cong E\cong\bP^{n-1}$. By Lemma \ref{birlocal}, it lies in the smooth locus of $Y$.

Since $g$ is induced by $D$, $-(K_{Y}+g(E))=\pi_* D$ is ample. To show the nefness of $g(E)$ 
we only need to show that $(L\cdot g(E))\geq 0$ for a line $L$ in $g(E)$. We may assume $L$ intersects the the exceptional locus of $g$. Denote by $L'$ the strict transform of $L$ in $\hX$. Let $W=g^*g(E)-E$, then it is an effective Cartier divisor supported on $\Ex(g)$. Since $-W\sim_{g.\bQ.}-K_{\hX}$ is $g$-ample, we have Ex($g$)$\subseteq W$, hence $(L'\cdot W)\ge1$ and $(L\cdot g(E))=(L'\cdot(E+W))=-1+(L'\cdot W)\ge0$.
\end{proof}

According to Lemma \ref{lem:g(E)nef}, we are now in the situation of Lemma \ref{birlocal} with $\lambda=1$. In order to classify $X$,
we first need to study the structure of the birational map $g:\hX\to Y$ in greater detail. This is accomplished by the following lemma.

\begin{lem}\label{lem:blowup}
Under the same notations and assumptions as in Lemma \ref{birlocal}, $\hX$ is the blowup of $Y$ along a divisor in $G$.
\end{lem}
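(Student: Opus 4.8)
The plan is to show that under the hypotheses of Lemma~\ref{birlocal}---namely $\hX$ Cohen--Macaulay, $E$ a smooth $g$-ample Cartier divisor with $-K_{\hX}\sim_{g.\bQ.}E$ (recall $\lambda=1$), and $g|_E:E\to G$ an isomorphism onto a divisor $G$ lying in the smooth locus of $Y$---the morphism $g$ is the blowup of $Y$ along a suitable divisor $\Gamma\subset G$. The natural candidate for the blown-up center is the locus $\Gamma = G\cap \Ex(g)$, i.e. the image under $g|_E$ of the points $q=E\cap g^{-1}(y)$ as $y$ ranges over the exceptional locus of $g$. First I would establish that the exceptional fibers are well-controlled: by the analysis already carried out (each fiber $g^{-1}(y)$ over an exceptional point $y$ is a single smooth rational curve $C$ meeting $E$ transversally at one point $q$, and $Y$ is smooth at $y$), the exceptional locus $\Ex(g)$ is a divisor swept out by a family of such curves, and correspondingly $\Gamma$ is a divisor in $G\cong\bP^{n-1}$.

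The main step is to identify $g$ with the blowup morphism $\mathrm{Bl}_\Gamma Y\to Y$. My approach would be to compare $g$ with the blowup $b:\tilde Y=\mathrm{Bl}_\Gamma Y\to Y$ via the universal property of blowing up: since $g$ is proper birational and the ideal sheaf of $\Gamma$ pulls back to an invertible sheaf on $\hX$ (this is where I would use that $W=g^*G-E$ is the effective $g$-ample Cartier divisor supported on $\Ex(g)$, together with $-K_{\hX}\sim_{g.\bQ.}E$), there should be a factorization $\hX\to\tilde Y$ over $Y$. To promote this factorization to an isomorphism, I would argue fiberwise. Cutting down by general hyperplane sections through an exceptional point $y$, as in the proof of Lemma~\ref{birlocal}, reduces the question to a statement about the birational contraction $g|_S:S\to g(S)$ of normal surfaces with Du Val singularities along $C$; the equality $(-K_S\cdot C)=1$ forces $\tilde C$ to be a $(-1)$-curve and $g(S)$ smooth at $y$. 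The key local computation is that the single transversal intersection point $(C\cdot E)=1$ and the Du Val condition pin down the local structure of $g$ near $C$ to be exactly a single smooth point blowup of the surface slice, which is the restriction of a divisorial blowup of $Y$ along $\Gamma$ to the slice.

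Concretely, I would verify that $g$ is the blowup of $Y$ along $\Gamma$ by checking the two defining properties locally around a general $y\in\Gamma$: that $g$ is an isomorphism away from $\Gamma$ (clear, since $\Ex(g)$ lies over $\Gamma$ by construction and $g$ is birational), and that $g^{-1}(\Gamma)=E$ is a Cartier divisor whose normal-bundle data matches the exceptional divisor of $\mathrm{Bl}_\Gamma Y$. Here the fact that $E$ is a smooth Cartier divisor mapping isomorphically to $G$, combined with $\Gamma\subset G$ being a divisor, means that $E\to G$ realizes $E$ as a divisorial exceptional locus sitting over $\Gamma$ with one-dimensional fibers collapsing to points of $\Gamma$; since each fiber $C$ satisfies $(E\cdot C)=1$, the blowup is along a codimension-two center $\Gamma$ (codimension one in the smooth divisor $G$, codimension two in $Y$), which is precisely a divisor in $G$.

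The hard part, I expect, will be upgrading the local surface-slice picture to a global scheme-theoretic identification $\hX\cong\mathrm{Bl}_\Gamma Y$ without assuming $Y$ smooth globally or $\Gamma$ smooth: one must show the ideal $\cI_\Gamma\cdot\cO_{\hX}$ is invertible and that $\hX$ has no extra components or embedded structure over $\Gamma$, using only the Cohen--Macaulay hypothesis on $\hX$ and the $g$-ampleness of $E$. I would handle this by invoking that the fibers over $\Gamma$ are reduced points of $E$ glued to the $\bP^1$-fibers $C$, so that the exceptional divisor structure is reduced and irreducible over the generic point of each component of $\Gamma$, and then extend this over all of $\Gamma$ by the normality of $Y$ along $G$ and flatness considerations analogous to those in Lemma~\ref{ruled}. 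This fiber-dimension and reducedness control is the technical crux that makes the universal-property factorization an honest isomorphism.
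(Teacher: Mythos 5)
There is a genuine gap, and it sits exactly at the point you yourself flag as the ``hard part'': your candidate center $\Gamma=G\cap\Ex(g)$ carries the \emph{reduced} scheme structure (you describe it as swept out by the points $q=E\cap g^{-1}(y)$, and your final step appeals to reducedness of the exceptional structure), whereas the correct center is the divisor $g(W)\cong W\cap E$ in $G$, with $W=g^*G-E$, and this divisor is in general \emph{non-reduced}. Concretely, take $Y=\bA^2$ with coordinates $(a,b)$, $G=(a=0)$, and let $g:\hX\to Y$ be the blowup of the ideal $(a,b^k)$ with $k\ge2$, i.e.\ $\hX=(at=b^ks)\subset\bA^2\times\bP^1_{[s:t]}$, which has a single $A_{k-1}$ point. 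All hypotheses of Lemma \ref{birlocal} hold with $\lambda=1$: the strict transform $E$ of $G$ is a smooth $g$-ample Cartier divisor avoiding the singular point, $g|_E$ is an isomorphism, $\hX$ is Cohen--Macaulay, and $K_{\hX}=g^*K_Y+kC$ together with $g^*G=E+kC$ gives $-K_{\hX}\sim_{g.\bQ.}E$. Here $W=kC$ and the true center is the length-$k$ subscheme $(a=b^k=0)$ of $G$, not the reduced point $(a=b=0)$. Your universal-property step fails outright in this example: $\cI_\Gamma\cdot\cO_{\hX}=(a,b)\cdot\cO_{\hX}$ is the ideal of a ruling through the $A_{k-1}$ point, a Weil divisor that is not Cartier, so it is not invertible and no morphism $\hX\to\mathrm{Bl}_\Gamma Y$ over $Y$ exists at all. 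Moreover, no fiberwise or surface-slice argument can repair this: $\mathrm{Bl}_{(a,b)}Y$ and $\mathrm{Bl}_{(a,b^k)}Y$ have identical fiberwise data (reduced irreducible $\bP^1$ fibers, $(E\cdot C)=1$, Du Val slices, $Y$ smooth along $G$), and the information that distinguishes them is precisely the non-reduced scheme structure of the center, which your argument never recovers. This is not a pathological case; it is exactly the local picture the paper meets in case (3) of Lemma \ref{lem:birational}.

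The paper's proof avoids guessing the center altogether: since $-W$ is $g$-ample, $\hX\cong\mathrm{Proj}\bigoplus_{m\ge0}\cJ_m$ over $Y$, where $\cJ_m=g_*\cO_{\hX}(-mW)$ are ideal sheaves on $Y$; after shrinking so that $\hX$ is klt (it has cDV singularities along $\Ex(g)$ by Lemma \ref{birlocal}), relative Kawamata--Viehweg vanishing gives $R^1g_*\cO_{\hX}(-mW)=0$, which produces the exact sequences $0\to\cJ_m(-G)\to\cJ_{m+1}\to\cO_E(-(m+1)W)\to0$; Nakayama at $m=0$ identifies $\cJ:=\cJ_1$ locally as $(a,b)$, where $(a=b=0)$ cuts out the (possibly non-reduced) divisor $g(W)$ in $G$, and induction gives $\cJ_m=\cJ^m$, whence $\hX=\mathrm{Bl}_{\cJ}Y$. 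If you redefine $\Gamma$ scheme-theoretically as $g(W)=W\cap E$, your factorization strategy becomes viable in principle, but proving that $\cI_{g(W)}\cdot\cO_{\hX}$ is invertible and that the resulting map is an isomorphism essentially forces the same pushforward-and-vanishing computation, so the cohomological argument is not really avoidable.
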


\begin{proof}
First note that by Lemma \ref{birlocal} and its proof, $\hX$ has only compound Du Val singularities along Ex($g$), hence after shrinking $\hX$ and $Y$ we may assume that $\hX$ has only klt singularities.

Let $W=g^{*}G-E$ as above, then $W$ is $g$-exceptional and $-W$ is
a $g$-ample Cartier divisor on $\hat{X}$, hence we have $\hat{X}\cong\mathrm{Proj}\oplus_{m=0}^{\infty}\mathcal{J}_{m}$
where $\mathcal{J}_{m}=g_{*}\mathcal{O}_{\hat{X}}(-mW)$($m=0,1,\cdots$).
It is clear that each $\mathcal{J}_{m}$ is an ideal sheaf on $Y$.
Let $\mathcal{J}=\mathcal{J}_{1}$, we claim that $\mathcal{J}$ is
the ideal sheaf of a hypersurface in $g_{*}E$ and $\mathcal{J}_{m}=\mathcal{J}^{m}$.

To see this, note that since $-mW-K_{\hat{X}}\sim_{g.\mathbb{Q}}(m+1)E$
is $g$-ample and $\hat{X}$ is klt, we have $R^{1}g_{*}\mathcal{O}_{\hat{X}}(-mW)=0$
for all $m\ge0$. Hence from the pushforward $g_{*}$ of
\[
0\rightarrow\mathcal{O}_{\hat{X}}(-g^{*}G-mW)\rightarrow\mathcal{O}_{\hat{X}}(-(m+1)W)\rightarrow\mathcal{O}_{E}(-(m+1)W)\rightarrow0
\]
we obtain an exact sequence
\[
0\rightarrow\mathcal{J}_{m}(-G)\rightarrow\mathcal{J}_{m+1}\rightarrow\mathcal{O}_{E}(-(m+1)W)\rightarrow0
\]
Taking $m=0$, by Nakayama lemma we see that locally $\mathcal{J}=(a,b)$
is the ideal sheaf of $g(W)$ where $a=0$ (resp. $a=b=0$) is the
local defining equation of $G$ (resp. $g(W)$). Note that the restriction of $g$ to $E$ is an isomorphism, so $g(W)\cong W\cap E$ is a divisor (not necessarily irreducible or reduced) in $G$. Suppose we have
shown $\mathcal{J}_{m}=\mathcal{J}^{m}$ for some $m\ge1$ (the case
$m=1$ being clear), then the above exact sequence tells us that $\mathcal{J}_{m+1}$
is generated by $a\cdot\mathcal{J}_{m}$ and $b^{m+1}$, hence $\mathcal{J}_{m+1}=\mathcal{J}^{m+1}$
as well. The claim then follows by induction on $m$ and the lemma
follows immediately from the claim.
\end{proof}

Now we will classify the pairs $(Y,g(E))$ satisfying the statement of Lemma \ref{lem:g(E)nef}.
By abuse of notation, we will simply denote the divisor by $E$ instead of $g(E)$. We remark that Bonavero, Campana and Wi\'sniewski classified such pairs in \cite{blowupFano} when $Y$ is smooth. 

\begin{lem} \label{lem:Y}
Let $Y$ be an $n$-dimensional $\mathbb{Q}$-Fano variety containing
a prime divisor $E\cong\mathbb{P}^{n-1}$ in its smooth locus.
\begin{enumerate}
\item If $\rho(Y)=1$, then either $Y$ is a weighted projective space $\mathbb{P}(1^{n},d)$
for some $d\in\mathbb{Z}_{>0}$ and $E$ the hyperplane defined by
the vanishing of the last coordinate, or $n=2$, $Y\cong\mathbb{P}^{2}$
and $E$ is a smooth conic curve;
\item If $\rho(Y)\ge2$ and $-(K_{Y}+E)$ is ample, then $Y$ is a $\mathbb{P}^{1}$-bundle
$\mathbb{P}(\mathcal{O}\oplus\mathcal{O}(-d))$ over $\mathbb{P}^{n-1}$
for some $d\in\mathbb{Z}_{\ge0}$ and $E$ is a section. If $n\ge 3$
and $d\geq n$ then $E$ is the only section with negative normal bundle.
\end{enumerate}
\end{lem}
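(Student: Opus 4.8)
The plan is to analyze the two cases by Picard rank, using in both the ampleness of $-(K_Y+E)$ together with $E\cong\bP^{n-1}$ and the fact that $Y$ is $\bQ$-Fano.

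First I would treat case (1), where $\rho(Y)=1$. Since $E\cong\bP^{n-1}$ is a prime divisor in the smooth locus generating the Picard group up to torsion, I would write $-K_Y\sim_{\bQ} rE$ for some positive rational $r$, and study the restriction $E|_E=\cO_{\bP^{n-1}}(e)$ for some integer $e$. By adjunction, $K_E=(K_Y+E)|_E$, so $\cO_{\bP^{n-1}}(-n)=(K_Y+E)|_E\sim_{\bQ}(1-r)E|_E$, forcing a numerical relation between $e$ and $r$. The key point is that $E$ moves in a pencil or spans enough sections to realize $Y$ as a cone or weighted projective space: when $e\ge 1$, the sections of $\cO_Y(E)$ embed $Y$ so that $E$ becomes a coordinate hyperplane, giving $Y\cong\bP(1^n,d)$ with $E=(x_{n+1}=0)$; the exceptional low-dimensional case $e$ large relative to $n$ (only possible when $n=2$) produces $\bP^2$ with $E$ a conic. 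I would make this rigorous by computing $h^0(Y,\cO_Y(E))$ via Kawamata--Viehweg vanishing (using that $E-K_Y$ is ample) and matching dimensions against the weighted projective models.

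For case (2), where $\rho(Y)\ge 2$, I would run the minimal model program on $Y$. Since $-(K_Y+E)$ is ample and $E$ is nef, any $K_Y$-negative extremal contraction is controlled by its behavior on $E$. The strategy is to find an extremal ray $R$ and a contraction $\phi\colon Y\to Z$; because $-(K_Y+E)$ is ample, curves in $R$ meet $E$ in a constrained way, and I expect $\phi$ to be of fiber type, exhibiting $Y$ as a $\bP^1$-bundle. I would show that the contraction restricted to $E\cong\bP^{n-1}$ is finite onto its image (as $E$ is nef and $\bP^{n-1}$ has Picard rank one), which forces $E$ to be a section and the fibration to be over $\bP^{n-1}$. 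Then $Y=\bP(\cO\oplus\cO(-d))$ follows from the classification of $\bP^1$-bundles over $\bP^{n-1}$ admitting a section with the prescribed normal bundle, where $d\ge 0$ is read off from $E|_E$. The final clause, that $E$ is the unique section of negative self-intersection when $n\ge 3$ and $d\ge n$, I would get by examining the two natural sections of $\bP(\cO\oplus\cO(-d))$ and checking their normal bundles $\cO(d)$ and $\cO(-d)$; uniqueness reduces to showing no other section can have negative normal bundle, which follows from the structure of maps $\bP^{n-1}\to Y$ splitting the projection.

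The main obstacle I anticipate is case (2): ensuring the extremal contraction is genuinely of fiber type rather than birational, and that its base is exactly $\bP^{n-1}$ with $E$ a section. The delicate part is using the nefness of $E$ together with $\rho(Y)\ge 2$ to rule out a divisorial or small contraction and to pin down the base; I would lean on the hypothesis $-(K_Y+E)$ ample to force the fiber dimension to be one and on $E\cong\bP^{n-1}$ having Picard rank one to identify the base. Relating the integer $d$ correctly to the normal bundle $E|_E$, and handling the possible singularities of $Y$ off $E$ (since $Y$ is only $\bQ$-Fano, not smooth), will require care but should not present conceptual difficulties.
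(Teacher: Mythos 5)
Your numerical setup in case (1) (adjunction giving a relation between $e$ and $r$) and your two-case skeleton match the paper, but both cases as sketched have genuine gaps. In case (1), the dichotomy is not governed by ``$e$ large relative to $n$'': the cone $\bP(1,1,4)$ with $E=(x_3=0)$ and $\bP^2$ containing a smooth conic both have $E\cong\bP^1$, $N_{E/Y}\cong\cO_{\bP^1}(4)$ and $-K_Y\sim_{\bQ}\tfrac{3}{2}E$, so the invariants $(e,r)$ cannot tell these two outcomes apart. What actually distinguishes them is the index of the restriction map $\mathrm{Cl}(Y)\to\mathrm{Pic}(E)$, and controlling that map is the hard part: the paper proves, via the Goresky--MacPherson Lefschetz theorem applied to the smooth locus $Y^{\circ}$, together with $H^1(Y^{\circ},\cO_{Y^{\circ}})=0$ (cohomology with supports) and the exponential sequence, that this restriction is an isomorphism when $n\ge3$ and has index at most $2$ when $n=2$. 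Moreover, ``computing $h^0(Y,\cO_Y(E))$ and matching dimensions against the weighted projective models'' does not prove $Y\cong\bP(1^n,d)$: equality of dimensions of linear systems does not identify the variety. The paper's mechanism is the cyclic cover trick: pass to the degree-$d$ cover $Y'\to Y$ branched along $E$, check that $(Y',E')$ satisfies the same hypotheses with $N_{E'/Y'}\cong\cO_{E'}(1)$, show $|E'|$ is base point free and defines an isomorphism $Y'\cong\bP^n$ carrying $E'$ to a hyperplane, and then recover $(Y,E)$ as the cyclic quotient. Some step of this kind is needed to close case (1).

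In case (2) there are two concrete problems. First, you repeatedly invoke nefness of $E$, which is not a hypothesis of the lemma and genuinely fails for pairs the lemma covers: $(Y,E)=(\mathrm{Bl}_{\mathrm{pt}}\bP^n,\ \text{exceptional divisor})$ has $\rho(Y)=2$ and $-(K_Y+E)=(n+1)H-nV$ ample, yet $E$ is the negative section ($d=1$). Second, and more seriously, an arbitrary $(K_Y+E)$-negative extremal contraction need not be of fiber type: in that same example the ray spanned by lines in $E$ is extremal and its contraction is divisorial---it contracts $E$ itself to a point. So the heart of the matter is choosing the right extremal ray and excluding birational contractions, which your sketch does not address. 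The paper's route is: if $(E\cdot l)>0$ for a line $l\subset E$, the sequence $0\to T_E|_l\to T_Y|_l\to N_{E/Y}|_l\to0$ shows $T_Y|_l$ is ample, so $l$ is very free and $\bR_{\geq 0}[l]$ cannot be extremal once $\rho(Y)\ge2$; hence there is an extremal ray $\bR_{\geq 0}[\Gamma]$ with $(E\cdot\Gamma)>0$ and $[\Gamma]\notin\bR_{\geq 0}[l]$. For its contraction $h$, Kawamata--Viehweg vanishing (exactly as in the proof of Theorem \ref{mainthm}) shows $h|_E$ is a closed embedding, which forces $(E\cdot\Gamma)=1$ and $(-K_Y\cdot\Gamma)>1$, and then Lemma \ref{birlocal} (the surface-cutting lemma, which caps the relative anticanonical multiple at $\lambda=1$) rules out $h$ being birational. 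Without these three ingredients---the very-free-curve selection of the ray, the vanishing argument for the closed embedding, and Lemma \ref{birlocal}---your claim that the contraction is of fiber type with base $\bP^{n-1}$ and $E$ a section is unsupported.
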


\begin{proof}
Note that in the case $\rho(Y)=1$, $E$ is necessarily an ample divisor
on $Y$. As $E$ does not intersect the singular locus of $Y$, $Y$
has only isolated singularities. By adjunction $-(K_{Y}+E)|_{E}=-K_{E}$
is ample, hence $-(K_{Y}+E)$ is ample as well. Let $Y^{\circ}$ be
the smooth locus of $Y$ and $i:E\rightarrow Y^{\circ}$ the inclusion.

First assume $\rho(Y)=1$ and $n\ge3$. By the generalized version
of Lefschetz hyperplane theorem \cite[Theorem II.1.1]{gm}, $H_{i}(Y^{\circ},E,\mathbb{Z})=H^{i}(Y^{\circ},E,\mathbb{Z})=0$
for $i<n$, hence by the universal coefficient theorem, $H^{n}(Y^{\circ},E,\mathbb{Z})$
is torsion free. As $n\ge3$, this implies the restriction map $i^{*}:H^{2}(Y^{\circ},\mathbb{Z})\rightarrow H^{2}(E,\mathbb{Z})$
is injective and has torsion free cokernel. But $H^{2}(E,\mathbb{Z})\cong\mathbb{Z}$
since $E\cong\mathbb{P}^{n-1}$, so $i^{*}$ is in fact an isomorphism.
As $Y$ is $\bQ$-Fano we have $H^1(Y,\cO_Y)=0$ by Kawamata-Viehweg vanishing and $Y$ is Cohen-Macaulay by \cite[Theorem 5.22]{km98}. Since $Z=\mathrm{Sing}Y$ consists of isolated points and $n\ge3$, by the long exact sequence of cohomology with support
\[\cdots\rightarrow H^1_Z(Y,\cO_Y)\rightarrow H^1(Y,\cO_Y)\rightarrow H^{1}(Y^{\circ},\mathcal{O}_{Y^{\circ}})\rightarrow H^2_Z(Y,\cO_Y) \rightarrow\cdots\]
we get $H^{1}(Y^{\circ},\mathcal{O}_{Y^{\circ}})=0$. Combining this with the exponential sequence $0\rightarrow\mathbb{Z}\rightarrow\mathcal{O}_{Y^{\circ}}\rightarrow\mathcal{O}_{Y^{\circ}}^{*}\rightarrow0$, we see that the restriction $i^{*}:\mathrm{Cl}(Y)=\mathrm{Pic}(Y^{\circ})\rightarrow\mathrm{Pic}(E)\cong\mathbb{Z}$ is also an isomorphism.

Let $H$ be the ample generator of $\mathrm{Cl}(Y)$, then $E\sim dH$
for some $d\in\mathbb{Z}_{>0}$. Let $\pi:Y'\rightarrow Y$ be the
(normalization of the) cyclic cover of degree $d$ of $Y$ ramified
at $E$ and $E'=\pi^{-1}(E)_{\mathrm{red}}$. Then $K_{Y'}+E'=\pi^{*}(K_{Y}+E)$
as $E$ is the only branched divisor, hence $Y'$ is also $\mathbb{Q}$-Fano
and $E'$ satisfies the same assumptions of the lemma. We also have
$\mathcal{O}_{E'}(dE')\cong\mathcal{O}_{E'}(\pi^{*}E)=\pi^{*}N_{E/Y}\cong\mathcal{O}_{E'}(d)$,
hence $N_{E'/Y'}\cong\mathcal{O}_{E'}(1)$ is the hyperplane class.
Note that $E'$ is ample since it's the preimage of the ample divisor
$E$. It now follows from the long exact sequence
\[
0\rightarrow H^{0}(Y',\mathcal{O}_{Y'})\rightarrow H^{0}(Y',\mathcal{O}_{Y'}(E'))\rightarrow H^{0}(E',N_{E'/Y'})\rightarrow H^{1}(Y',\mathcal{O}_{Y'})=0
\]
that the linear system $|E'|$ is base point free, has dimension $n$
and defines an isomorphism $Y'\cong\mathbb{P}^{n}$ such that $E'$
is mapped to a hyperplane. Our original pair $(Y,E)$ is then obtained
by taking a cyclic quotient of degree $d$ ramified at $E'$, and
is easily seen to be as claimed in the statement of the lemma.

Next assume $\rho(Y)=1$ and $n=2$. Then $Y$ has quotient singularity
and is $\mathbb{Q}$-factorial, hence $\mathrm{Cl}(Y)$ has rank one.
As $E$ is ample, $\pi_{1}(E)\rightarrow\pi_{1}(Y^{\circ})$ is surjective by \cite[Theorem II.1.1]{gm},
but $\pi_{1}(E)=\pi_{1}(\mathbb{P}^{1})=0$, so $Y^{\circ}$ is simply
connected as well. In particular, $\mathrm{Cl}(Y)=\mathrm{Pic}(Y^{\circ})$
is torsion-free and thus $\cong\mathbb{Z}$. Let $r$ be the index
of $i^{*}\mathrm{Cl}(Y)$ in $\mathrm{Pic}(E)$. As $-(K_{Y}+E)|_{E}=-K_{E}$
has degree 2, $r=1$ or 2. Let $H$ be the ample generator of $\mathrm{Cl}(Y)$,
then $(H.E)=r$ and $E\sim dH$ for some $d\in\mathbb{Z}_{>0}$. Let
$\pi:Y'\rightarrow Y$ be the corresponding cyclic cover of degree
$d$ and define $E'$ as before. By the same argument as the $n\ge3$
case, we have $N_{E'/Y'}\cong\mathcal{O}_{E'}(r)$, and if $r=1$,
the linear system $|E'|$ defines an isomorphism $(Y',E')\cong(\mathbb{P}^{2},\mathrm{hyperplane})$,
while if $r=2$, then linear system $|E'|$ embeds $Y'$ into $\mathbb{P}^{3}$
as a quadric surface. Taking cyclic quotients, we see that the origin
$(Y,E)$ is again as claimed. 

Finally assume $\rho(Y)\ge2$ and $-(K_{Y}+E)$ is ample. Let $l$ be a line in $E$. We claim that there is an extremal ray $\bR_{\geq 0}[\Gamma]$ in $\overline{NE}(Y)$ with $\Gamma$ an irreducible reduce curve on $Y$ such that $[\Gamma]\not\in\bR_{\geq 0}[l]$ and $(E\cdot \Gamma)>0$. If $(E\cdot l)=0$, then such $\Gamma$ exists since $E$ is not numerically trivial.
 If $(E\cdot l)>0$, consider the exact sequence
\[
0\to T_E|_l\to T_Y|_l \to N_{E/Y}|_l\to 0.
\]
It is clear that $T_E|_l$ is ample because $E\cong\bP^{n-1}$. On the other hand, $\deg N_{E/Y}|_l=(E\cdot l)>0$ hence $N_{E/Y}|_l$ is ample.
Therefore, $T_Y|_l$ is also ample which implies that $l$ is a very free rational curve in $Y$. Since $\rho(Y)\geq 2$, $\bR_{\geq 0}[l]$ cannot be an extremal ray of $\overline{NE}(Y)$ (otherwise the contraction of $l$ will contract $Y$ to a single point), which means that such $\Gamma$ exists. 

Now let $h:Y\to Z$ be the contraction of $\Gamma$. As we argued in the proof of Theorem \ref{mainthm}, $h|_E:E\to Y$ is a closed embedding, hence $(E\cdot \Gamma)=1$. Since $-(K_Y+E)$ is ample, we have $(-K_Y\cdot \Gamma)>1$. Then by the same reason in the proof of Theorem \ref{mainthm}, we conclude that $h$ has to be a fiber type contraction. Hence $Y$ is a $\bP^1$-fibration over $Z\cong \bP^{n-1}$ admitting a section $h|_E^{-1}:Z\to E$, so $Y\cong\bP_{Z}(\cO\oplus\cO(-d))$ with 
$d\geq 0$. If $n\geq 3$, then $E$ corresponds to either a surjection $\cO\oplus\cO(-d)\twoheadrightarrow\cO$ or a surjection $\cO\oplus\cO(-d)\twoheadrightarrow\cO(-d)$. If in addition $d\geq n$, then 
$-(K_Y+E)$ being ample implies that $E$ is the unique section corresponding
to the second projection $\cO\oplus\cO(-d)\twoheadrightarrow\cO(-d)$.
\end{proof}

Combining the last two lemmas we can give a partial classification of $X$:

\begin{lem}\label{lem:birational}
If $g$ is birational then $X$ is one of the following:
\begin{enumerate}
\item a degree $d+1$ weighted hypersurface $X_{d+1}=(x_0x_{n+1}=f(x_1,\cdots,x_n))\subset\mathbb{P}(1^{n+1},d)$;
\item the blow-up of $\bP^n$ along the complete intersection of a hyperplane and a hypersurface of degree $d\le n$;
\item a Gorenstein log Del Pezzo surface of degree $\ge 5$.
\end{enumerate}
\end{lem}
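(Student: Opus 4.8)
The plan is to combine the three structural lemmas just established and then run a case analysis. By Lemma \ref{lem:g(E)nef}, when $g$ is birational the pair $(Y,E)$ (writing $E$ for $g(E)$) consists of a $\bQ$-Fano variety $Y$ together with a nef divisor $E\cong\bP^{n-1}$ lying in its smooth locus and with $-(K_Y+E)$ ample; hence $(Y,E)$ is one of the pairs listed in Lemma \ref{lem:Y}. By Lemma \ref{lem:blowup}, $\hX$ is the blowup of $Y$ along a divisor $Z\subset E$, which under the identification $E\cong\bP^{n-1}$ is a hypersurface of some degree $e\geq 0$. The key numerical input is that $\sigma:\hX\to X$ contracts the strict transform $\tilde E$ of $E$ (which is again $\cong\bP^{n-1}$, since $Z$ is Cartier in $E$) down to the \emph{smooth} point $p$, so its normal bundle must be $\cO_{\bP^{n-1}}(-1)$. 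Since $\tilde E=g^*E-W$, where $W$ is the $g$-exceptional divisor, one computes $N_{\tilde E/\hX}\cong N_{E/Y}\otimes\cO_E(-Z)$, giving $\deg N_{E/Y}=e-1$. This determines $e$ in each case and, combined with $e\geq 1$ (we may assume $Z\neq\emptyset$, since $Z=\emptyset$ forces $X\cong\bP^n$, which is excluded in the equality case), discards the subcases where $\tilde E$ can only be contracted to a singular point.

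Next I would treat the case $\rho(Y)=1$ with $n\geq 3$. Here Lemma \ref{lem:Y} gives $Y=\bP(1^n,d)$ with $E$ the coordinate hyperplane $\{z=0\}$, so $N_{E/Y}\cong\cO_{\bP^{n-1}}(d)$, hence $e=d+1$ and $Z=(f=0)$ for a degree $d+1$ form $f$ in the $n$ weight-one coordinates. To identify $X$, I would use the projection $\bP(1^{n+1},d)\dashrightarrow\bP(1^n,d)$ forgetting $x_0$: restricted to the weighted hypersurface $X_{d+1}=(x_0x_{n+1}=f)$ it is an isomorphism away from $q_0=[1:0:\cdots:0]$, and blowing up $q_0$ resolves it precisely to $\mathrm{Bl}_Z\bP(1^n,d)=\hX$. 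This yields $X\cong X_{d+1}$, which is case (1); note the cone point of $\bP(1^n,d)$ lies off $E$, so it survives in $X$, consistent with $X_{d+1}$ being (in general) singular.

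For $\rho(Y)\geq 2$, Lemma \ref{lem:Y} gives $Y=\bP(\cO\oplus\cO(-d'))$ over $\bP^{n-1}$ with $E$ a section; the constraint $\deg N_{E/Y}=e-1\geq 0$ forces $E$ to be the section with nonnegative normal bundle, so that $\tilde E$ contracts to a smooth rather than a cyclic quotient point. As $Y$, and therefore $\hX$ and $X$, are smooth here, $X$ is a Fano manifold with $\rho(X)=\rho(Y)=2$; analysing its two extremal contractions, one of which recovers $\bP^n$, I expect to identify $X$ as the blowup of $\bP^n$ along the complete intersection $\Gamma$ of a hyperplane and a degree $d$ hypersurface, with the Fano condition (equivalently ampleness of $-(K_Y+E)$) forcing $d\leq n$; this is case (2). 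The remaining low-dimensional exceptions of Lemma \ref{lem:Y} ($n=2$, with $Y\cong\bP^2$ and $E$ a conic, or $Y\cong\bP(1,1,d)$) I would handle by the same reconstruction: $X$ comes out as a Gorenstein log Del Pezzo surface, and a direct computation of $K_X^2$ shows its degree is at least $5$, giving case (3).

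The main obstacle is the explicit reconstruction of $X$ from the blow-up/blow-down data in each case: realizing the contraction of $\tilde E$ as a (weighted) blow-down and recognizing the resulting variety through its graded coordinate ring, and in the $\rho(Y)\geq 2$ case correctly matching the $\bP^1$-bundle $Y$ to a blow-up of $\bP^n$ rather than of a weighted projective space. Bookkeeping of which section of the $\bP^1$-bundle is nef and carries the admissible normal bundle, together with the verification of the sharp degree bounds ($d\leq n$ and degree $\geq 5$) from the Fano and ampleness conditions, are the places where care will be required.
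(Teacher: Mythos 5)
Your proposal follows the same skeleton as the paper's proof: combine Lemmas \ref{lem:g(E)nef}, \ref{lem:Y}, \ref{lem:blowup}, determine the degree of the blown-up divisor $Z\subset E$ from $N_{E/\hX}\cong\cO_{\bP^{n-1}}(-1)$ (your formula $\deg N_{E/Y}=e-1$ is exactly the paper's computation, and your $Z\neq\emptyset$ argument is a clean substitute for the paper's appeal to the last clause of Lemma \ref{lem:Y}), and reconstruct $X$ case by case via the same birational correspondences. However, two of your intermediate claims are false. In case (1), the projection $X_{d+1}\dashrightarrow\bP(1^n,d)$ is \emph{not} an isomorphism away from $q_0$: it contracts the divisor $(x_{n+1}=f=0)\cap X_{d+1}$ (the cone over $Z$ with vertex $q_0$) onto $Z$, and its image misses $E\setminus Z$ entirely; it is an isomorphism only over $Y\setminus E$. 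The conclusion $\mathrm{Bl}_{q_0}X_{d+1}\cong\mathrm{Bl}_Z Y=\hX$ is still correct (for instance because $\hX\to X_{d+1}$ contracts $\tilde E\cong\bP^{n-1}$ with normal bundle $\cO(-1)$ to a smooth point, hence is the point blowup), but not for the reason you give, and your stated justification is internally inconsistent with the existence of the contracted cone.

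The genuine gap is in case (2). You claim that $Y$ smooth forces $\hX$ and $X$ smooth, and then propose to analyze $X$ as a Fano \emph{manifold} of Picard rank $2$ via its extremal contractions. This is wrong: $\hX=\mathrm{Bl}_Z Y$ is singular exactly over $\mathrm{Sing}(Z)$, and $Z$ is an arbitrary hypersurface of degree $d+1\le n$ in $E\cong\bP^{n-1}$, which may be singular or even non-reduced. This generality cannot be avoided, since the classification you are proving includes blowups of $\bP^n$ along arbitrary (hence generally singular) complete intersections of a hyperplane with a degree $\le n$ hypersurface; the paper's own case (3) exhibits the resulting $A$-type singularities of $\hX$ explicitly via the local equations $at=b^k$ and $a=b^kt$. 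So your argument covers only the locus where $Z$ is smooth. The paper circumvents this by performing the elementary transformation of the $\bP^1$-bundle $Y$ centered at $Z$, which is insensitive to the singularities of $Z$: it realizes $\hX$ simultaneously as a blowup of $Y$ and of $\mathbb{P}(\cO\oplus\cO(-1))\cong\mathrm{Bl}_R\bP^n$, after which contracting $\tilde E$ identifies $X$ with the blowup of $\bP^n$ along a hypersurface in a hyperplane. A parallel (smaller) omission occurs in your case (3): before computing $K_X^2=(D^2)+4\ge 5$ one must know $X$ is Gorenstein with Du Val singularities, which again requires the local-equation analysis rather than following automatically from the reconstruction.
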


\begin{proof}
By Lemma \ref{lem:Y}, we have the following cases:

(1) $Y\cong\mathbb{P}(1^{n},d)$ with homogeneous coordinate $[y_{0}:\cdots:y_{n}]$
and $g(E)=(y_{n}=0)$. We have $N_{g(E)/Y}\cong\mathcal{O}_{E}(d)$.
By Lemma \ref{lem:blowup}, $\hat{X}$ is obtained by blowing up a
hypersurface $S=(f=0)$ in $g(E)$ where $f$ is a homogeneous polynomial
in $y_{0},\cdots,y_{n-1}$. As $N_{E/\hat{X}}\cong\mathcal{O}_{E}(-1)$
we see that $\deg f=d+1$. Consider the rational map $\phi:Y\dashrightarrow\mathbb{P}(1^{n+1},d)$
given by
\[
[y_{0}:\cdots:y_{n}]\mapsto[x_{0}:\cdots:x_{n+1}]=[y_{n}:y_{0}:\cdots:y_{n-1}:\frac{f(y_{0},\cdots,y_{n-1})}{y_{n}}]
\]
whose image lies in the weighted hypersurface $X_{d+1}$ define by
$x_0x_{n+1}=f(x_1,\cdots,x_n)$. It is clear that $\phi$
is contracts $g(E)$ to the point $[0:\cdots:0:1:0]$ and the indeterminacy
locus of $\phi$ is exactly $S$. By inspecting each affine chart
$(x_{i}\neq0)\subset Y$ it is easy to see that after blowing up $S$,
$\phi$ extends to a birational morphism $\hat{X}\rightarrow X_{d+1}$
that contracts $E$, hence $X\cong X_{d+1}$ as in the first case
in the statement of the lemma.

(2) $Y$ is a $\mathbb{P}^{1}$-bundle $\mathbb{P}(\mathcal{O}\oplus\mathcal{O}(-d))$
over $\mathbb{P}^{n-1}$ $(n\ge3$) and $g(E)$ is a section. Since
$g(E)$ is nef by Lemma \ref{lem:g(E)nef}, we have $d<n$ by Lemma
\ref{lem:Y}. Going back to the last part of the proof of Lemma \ref{lem:Y}
we see that the section $g(E)$ corresponds to a surjection $\mathcal{O}\oplus\mathcal{O}(-d)\twoheadrightarrow\mathcal{O}$
and hence $N_{g(E)/Y}\cong\mathcal{O}_{E}(d)$. By Lemma
\ref{lem:blowup} as in previous case, $\hat{X}$ is obtained by blowing
up a hypersurface $S$ of degree $d+1$ in $g(E)$. It is straightforward
to see that the elementary transformation of $Y$ with center $S$
is the $\mathbb{P}^{1}$-bundle $\mathbb{P}(\mathcal{O}\oplus\mathcal{O}(-1))$
over $\mathbb{P}^{n-1}$ , which is isomorphic to the blowup of a
point $R$ on $\mathbb{P}^{n}$, such that the strict transform $E'$
(resp. $H$) of $g(E)$ (resp. the negative section on $Y$) becomes
the exceptional divisor over $R$ (resp. a hyperplane in $\mathbb{P}^{n}$
that is disjoint from $R$). Contracting $E'$ and reversing this
procedure we see that $X$ is the blowup of $\mathbb{P}^{n}$ along
a hypersurface of degree $d+1\le n$ in a hyperplane.

(3) $Y\cong\mathbb{P}^{2}$ and $g(E)$ is a smooth conic, or $Y$ is
a ruled surface over $\mathbb{P}^{1}$ and $g(E)$ is a section. In
either case $Y$ is smooth and $\hat{X}$ is obtained by blowing up
subschemes of $g(E)$. Locally on $Y$, such a subscheme is defined
by $(a=b^{k}=0)$ where $a,b$ are local coordinates such that $g(E)=(a=0)$.
$\hat{X}$ then has local equation $at=b^{k}$ or $a=b^{k}t$ and
it follows that both $\hat{X}$ and $X$ have only Du Val singularities of type $A$. As $D=\sigma^{*}(-K_{X})-2E$ is big and nef and Cartier
in this case we have $(K_{X}^{2})=(D^{2})-4(E^{2})=(D^{2})+4\ge5$,
so $X$ is as described in the third case of the statement of the
lemma.
\end{proof}

\subsection{\texorpdfstring{$g$}{g} is of fiber type} \label{sec3.2}

\begin{lem}
If $g$ is of fiber type, then every fiber has dimension $1$, $g|_E:E\rightarrow Y$ is a double cover and $-K_{\hX}\sim_{g.\bQ.}E$ is $g$-ample.
\end{lem}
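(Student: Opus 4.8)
The plan is to begin by recording the $g$-linear equivalence $-K_{\hX}\sim_{g.\bQ.}E$, which is immediate: writing $D=\sigma^*(-K_X)-nE=-K_{\hX}-E$ and using that $D\sim_{g.\bQ.}0$ (since $g$ is the ample model of $D$, some multiple of $D$ is the pullback of an ample divisor on $Y$), we get $-K_{\hX}\sim_{g.\bQ.}E$. The crucial structural point, which I expect to drive the whole argument, is that $g|_E$ is finite. To prove this, let $C\subseteq E$ be any curve. Since $\sigma$ contracts $C$ to the smooth point $p$, the projection formula gives $(\sigma^*(-K_X)\cdot C)=0$, whereas $(E\cdot C)=\deg\big(N_{E/\hX}|_C\big)=\deg\big(\cO_{\bP^{n-1}}(-1)|_C\big)<0$ because $E\cong\bP^{n-1}$ with $N_{E/\hX}\cong\cO(-1)$. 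Hence $(D\cdot C)=(\sigma^*(-K_X)\cdot C)-n(E\cdot C)>0$, so no curve of $E$ is $D$-trivial, i.e. no curve of $E$ is contracted by $g$, and $g|_E$ is finite. Consequently $\dim Y\ge\dim g(E)=n-1$, which together with the fiber-type hypothesis $\dim Y<n$ forces $\dim Y=n-1$ and $g(E)=Y$.

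Next I would establish the $g$-ampleness of $E$ and then the bound on fiber dimension. Let $C$ be any curve contracted by $g$, so $(D\cdot C)=0$. By the finiteness just proved $C\not\subseteq E$, hence $\sigma(C)$ is a curve and $(\sigma^*(-K_X)\cdot C)=((-K_X)\cdot\sigma_*C)>0$ since $-K_X$ is ample; therefore $(E\cdot C)=\tfrac1n(\sigma^*(-K_X)\cdot C)>0$. As this holds for every curve contracted by $g$, the relative Kleiman criterion shows that $E$, and hence $-K_{\hX}\sim_{g.\bQ.}E$, is $g$-ample. Now suppose some fiber $F=g^{-1}(y)$ had a component of dimension $\ge2$. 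Since $g|_E$ is finite, no positive-dimensional fiber is contained in $E$, so $E|_F$ is an effective Cartier divisor (recall $E$ is Cartier, being the exceptional divisor of the blow-up of a smooth point); being $g$-ample it is ample on $F$, so $\dim(E\cap F)=\dim F-1\ge1$. This contradicts the finiteness of $E\cap F=(g|_E)^{-1}(y)$. Hence every fiber has dimension exactly $1$.

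Finally, to see that $g|_E$ is a double cover I would compute its degree as $(E\cdot F)$ for a general fiber $F$. Because $\hX$ is klt, $\mathrm{Sing}(\hX)$ has codimension $\ge2$, so a general fiber avoids it, and by generic smoothness together with the normality of $Y$ (so that a general $y$ is a smooth point) the curve $F$ lies in the smooth loci of both $\hX$ and $Y$ and is smooth. Its normal bundle $N_{F/\hX}$ is then trivial, so adjunction yields $-K_{\hX}|_F\cong-K_F$ and $(-K_{\hX}\cdot F)=2-2g(F)$. Since $(-K_{\hX}\cdot F)=(E\cdot F)>0$ we must have $g(F)=0$, i.e. $F\cong\bP^1$ and $(E\cdot F)=2$. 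As $E$ is Cartier, $E|_F$ is a length-$2$ divisor on $F$, so the scheme-theoretic fiber $(g|_E)^{-1}(y)$ has length $2$ for general $y\in Y$; thus the finite morphism $g|_E$ has degree $2$, i.e. it is a double cover.

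I expect the main obstacle to be the very first step: recognizing that, in sharp contrast to the birational case, every curve inside $E$ satisfies $(D\cdot C)>0$ rather than $(D\cdot C)=0$. This is precisely what forces $g|_E$ to be finite and ultimately a double cover; once finiteness is available, the statements about fiber dimension, $g$-ampleness, and the degree reduce to standard intersection theory and the generic-smoothness arguments already used in the proof of Theorem \ref{mainthm}.
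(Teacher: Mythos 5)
Most of your proposal runs parallel to the paper's proof: the relation $-K_{\hX}\sim_{g.\bQ.}E$ coming from $D\sim_{g.\bQ.}0$, the finiteness of $g|_E$ (your curve computation inside $E$ is exactly the statement that $D|_E\cong\cO_{\bP^{n-1}}(n)$ is ample, which is how the paper phrases it), and the double-cover count $(E\cdot F)=(-K_{\hX}\cdot F)=2$ on a smooth general fiber are all the same. The genuine gap is your deduction of $g$-ampleness. Knowing $(E\cdot C)>0$ for every curve $C$ contracted by $g$ does \emph{not} yield $g$-ampleness via the relative Kleiman criterion: that criterion requires strict positivity on the closed relative cone $\overline{NE}(\hX/Y)\setminus\{0\}$, and positivity on actual curves does not imply positivity on limit classes in the closure (this is precisely the failure in Mumford's classical example of a divisor that is positive on every curve yet not ample). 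The gap is load-bearing in your write-up, because your bound on the fiber dimension invokes the $g$-ampleness of $E$ (to guarantee $E\cap F\neq\emptyset$, hence $\dim(E\cap F)=\dim F-1$, for a fiber $F$ of dimension $\ge2$), so the flaw propagates to that step as well.

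The paper sidesteps all of this with an observation you missed: since $\epsilon_p(-K_X)=n>n-1$, the divisor $-K_{\hX}=\sigma^*(-K_X)-(n-1)E$ is ample, i.e.\ $\hX$ is itself a $\bQ$-Fano variety; hence $-K_{\hX}$ is in particular $g$-ample, and so is $E\sim_{g.\bQ.}-K_{\hX}$, with no cone-theoretic argument at all. Alternatively, your route can be repaired by reversing the order of the two middle steps: first bound the fiber dimension without ampleness (an irreducible positive-dimensional contracted subvariety $Z$ disjoint from $E$ would have $D|_Z=\sigma^*(-K_X)|_Z$ simultaneously numerically trivial and ample, which is impossible, so $Z$ meets $E$; and if $\dim Z\ge2$ then $Z\not\subseteq E$ by finiteness of $g|_E$, so $Z\cap E$, being the intersection with a Cartier divisor, has dimension $\ge1$, again contradicting finiteness), and only then conclude $g$-ampleness, since relative ampleness can be checked fiberwise and on one-dimensional fibers positivity of the Cartier divisor $E$ on each irreducible component suffices. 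As written, however, the Kleiman step is a real gap.
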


\begin{proof}
Since $\epsilon(-K_X,p)>n-1$, $\hX$ is $\bQ$-Fano, so $-K_{\hX}\sim_{g.\bQ.}E$ is $g$-ample. $D|_E$ is ample, so $E\rightarrow Y$ is finite and every fiber of $g$ has dimension one.
Let $l$ be a general fiber, then $l\cong\mathbb{P}^{1}$ and $(-K_{\hat{X}}\cdot l)=2=(E\cdot l)$,
so $E$ is a double section.
\end{proof}

Similar to the previous case, we first analyze the local structure of $g$ in a slightly more general setting. For ease of notations, we call $g:\hX\rightarrow Y$ (where $\hX$ and $Y$ are normal quasi-projective varieties) a \emph{rational conic bundle} if $g$ is proper, every fiber of $g$ has dimension $1$ and the generic fiber is isomorphic to $\bP^1$. If in addition $\hX$ is Cohen-Macaulay and there exists a Cartier divisor $E$ on $\hX$ such that $-K_{\hX}\sim_{g.\bQ.}E$ is $g$-ample, then we say that the rational conic bundle is \emph{Gorenstein}.
It is clear that a conic bundle is automatically a Gorenstein rational conic bundle.

\begin{lem}\label{fiblocal2}
Let $g:S\rightarrow C$ be a Gorenstein rational conic bundle. Assume $\dim S=2$, then $S$ is a conic bundle and in particular has only Du Val singularities.
\end{lem}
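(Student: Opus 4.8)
The plan is to realize $g\colon S\to C$ explicitly as a family of plane conics and then read off its singularities from the local equations. Since $S$ is a normal surface it is Cohen-Macaulay, and as all fibres of $g$ are one-dimensional over the smooth curve $C$, miracle flatness shows that $g$ is flat. Hence every fibre $F=g^{*}(c)$ is a Cartier divisor on $S$ with $\chi(\cO_{F})=\chi(\cO_{\bP^1})=1$, i.e. $p_{a}(F)=0$, and $F$ has no embedded points. Adjunction on a general (smooth) fibre $l\cong\bP^1$ gives $(-K_{S}\cdot l)=2$, and since $-K_{S}\sim_{g.\bQ.}E$ with $E$ Cartier and $g$-ample this forces $(E\cdot F)=(-K_{S}\cdot F)=2$ for every fibre $F$.

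Next I would classify the fibres. Each $F$ is connected (the fibres of $g$ are connected since $g_{*}\cO_{S}=\cO_{C}$) and, being a Cartier divisor in the Cohen-Macaulay surface $S$, is $S_{1}$, so it has no embedded points; a multiple fibre $F=mF_{0}$ would give $\chi(\cO_{F})\neq1$, so $F$ is reduced, and with $p_{a}(F)=0$ it is a tree of smooth rational curves. Because $E$ is $g$-ample with $(E\cdot F)=2$, the only possibilities are that $F$ is irreducible with $(E\cdot F)=2$, or a union of two $\bP^1$'s meeting transversally, each of $E$-degree $1$. Thus every fibre is a smooth or nodal conic, in particular a Gorenstein curve, and adjunction gives $\omega_{F}\cong\omega_{S}|_{F}$ invertible for every $c\in C$. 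Since $\omega_{S}$ is $g$-flat and restricts to an invertible sheaf on every fibre, the local criterion of flatness shows $\omega_{S}$ is invertible; hence $S$ is Gorenstein and $-K_{S}$ is a genuine $g$-ample Cartier divisor.

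Now I would produce the conic-bundle structure. Relative Kawamata-Viehweg vanishing gives $R^{1}g_{*}\cO_{S}(E)=0$, so by cohomology and base change $V:=g_{*}\cO_{S}(E)$ is locally free of rank $h^{0}(F,\cO_{F}(E))=\deg(E|_{F})+\chi(\cO_{F})=3$, and the relatively very ample linear system $|E|$ embeds $S$ over $C$ into $\bP(V)$ as a divisor meeting each fibre in a conic; that is, $S$ is a conic bundle. Finally, to identify the singularities, I would work locally: in an affine chart of $\bP(V)$ the surface $S$ is cut out by a quadratic form $q(x_{0},x_{1};t)=0$ whose fibre degenerates only to a pair of lines, so after diagonalising $S$ has local equation $x_{0}x_{1}=t^{k}$ (equivalently $x_{0}^{2}+x_{1}^{2}=t^{k}$), an $A_{k-1}$-singularity. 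Hence $S$ has only Du Val singularities of type $A$.

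The main obstacle is the fibre-classification step: one must exclude non-reduced (multiple) fibres and show that $E$ restricts to a very ample degree-$2$ sheaf on every fibre simultaneously, so that $g_{*}\cO_{S}(E)$ is a bona fide rank-$3$ bundle and the relative linear system embeds each fibre as a conic. This is exactly what upgrades the a priori weak hypothesis ``generic fibre $\bP^1$'' to an honest conic-bundle structure and, via adjunction, to the Gorenstein property used throughout; once it is in place, both the conic-bundle statement and the Du Val conclusion follow from the local normal form.
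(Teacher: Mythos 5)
Your fibre-classification step contains a genuine error, and it is exactly the step on which everything else rests. The claim that a multiple fibre $F=mF_0$ would have $\chi(\cO_F)\neq 1$ is false: non-reduced fibres do occur for Gorenstein rational conic bundles, and flatness forces $\chi(\cO_F)=1$ for them as well. Concretely, take $S=(x_0^2+tx_1^2+tx_2^2=0)\subset \bP^2\times\bA^1_t$ with $g$ the projection to $C=\bA^1_t$. This $S$ is a normal hypersurface, hence Cohen--Macaulay and Gorenstein, $-K_S=\cO(1)|_S$ is $g$-very ample, and the generic fibre is a smooth conic, so all hypotheses of the lemma hold; yet the fibre over $t=0$ is the double line $(x_0^2=0)$, carrying two $A_1$-points of $S$. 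Worse for your conclusion, $S=(x_0^2+tx_1^2+t^kx_2^2=0)$ has a $D_{k+1}$-singularity, so the assertion at the end of your proof that $S$ has only singularities of type $A$ (coming from the normal forms $x_0x_1=t^k$) is also false. Everything downstream of the reducedness claim --- fibres are smooth or nodal conics, hence $\omega_S|_F$ is invertible and the quadratic form cutting out $S$ in $\bP(V)$ has rank at least $2$ on every fibre --- silently excludes the rank-one (double-line) degenerations, which is precisely the case your argument never treats.

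The paper's proof avoids this by working on the minimal resolution $\phi:\tilde{S}\rightarrow S$: since $E$ is Cartier and $g$-ample, every component of a fibre has positive integral $E$-degree, so each fibre of $g$ has at most two components counted with multiplicity; one then lists the possible dual graphs of fibres of the birationally ruled surface $\tilde{S}\rightarrow C$ all of whose $\phi$-exceptional curves are $(-2)$-curves, and the first and third graphs in that list ($(-2)-(-1)-(-2)$ and the $D$-type fork) are exactly your missing double-line fibres, producing two $A_1$-points, respectively a $D_n$-point, after contraction. Your overall strategy (classify fibres, deduce that $\omega_S$ is invertible via the local criterion of flatness, embed relatively by $|E|$ into $\bP(g_*\cO_S(E))$) can in principle be repaired, but only by admitting non-reduced fibres: one must show such a fibre is a ribbon on $\bP^1$ which is still a Gorenstein plane conic, and then allow quadratic forms of fibrewise rank one in the final local analysis, which is what produces the $D$-type Du Val points. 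As written, the proof is incomplete.
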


\begin{proof}
Let $l$ be an irreducible component of a fiber of $g$, then $(-K_S\cdot l)=(E\cdot l)$ is a positive integer since $E$ is Cartier and $-K_S$ is $g$-ample. On the other hand, if $F$ is a general fiber of
$g$ then $(-K_S\cdot F)=2$. Hence every fiber of $g$ has at most
two irreducible components (counting multiplicities), so on the minimal
resolution of $S$ (which is a birationally ruled surface over $C$), every fiber over $C$ has one of the following as its dual graph:
\[(-2)-(-1)-(-2),\]
\[(-1)-(-2)-(-2)-\cdots-(-2)-(-1),\]
or
\begin{center}
\begin{tikzpicture}
\node (0) [anchor = west] {$(-2)$};
\node (1) [right = 0.25cm of 0, anchor = west] {$(-2)-\cdots-(-2)-(-1)$};
\node (2) [above left = 0.5cm of 0, anchor = east] { $(-2)$};
\node (3) [below left = 0.5cm of 0, anchor = east] { $(-2)$};
\path
    (0) edge (1)
    (2) edge (0)
    (3) edge (0);
\end{tikzpicture}
\end{center}
As $S$ is obtained by contracting those $(-2)$-curves, it has only Du Val singularities and is a conic bundle.
\end{proof}

\begin{cor}\label{fiblocal}
If $g:\hX\rightarrow Y$ is a Gorenstein rational conic bundle such that $Y$ is smooth, then $\hX$ is a conic bundle over $Y$. In particular, $\hX$ is a hypersurface in $\bP(\cE)$ for some rank $3$ vector bundle $\cE$ on $Y$.
\end{cor}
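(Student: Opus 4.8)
The plan is to deduce the global conic bundle structure from the surface case (Lemma \ref{fiblocal2}) by cutting $\hX$ down with hyperplane sections pulled back from $Y$, and then to build the embedding into a $\bP^2$-bundle using relative cohomology. First I would observe that $g$ is flat: since $\hX$ is Cohen-Macaulay, $Y$ is regular, and every fiber of $g$ has dimension $1=\dim\hX-\dim Y$, miracle flatness applies. Flatness is what will later let me run cohomology and base change, so I would record it at the outset.

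Next, to identify the fibers, fix a point $y\in Y$ and choose general members $H_1,\dots,H_{d-1}$ (where $d=\dim Y$) of a very ample linear system on $Y$ through $y$, so that $C=H_1\cap\cdots\cap H_{d-1}$ is a smooth curve through $y$ and $S=g^{-1}(C)=g^*H_1\cap\cdots\cap g^*H_{d-1}$ is a surface. As a complete intersection of Cartier divisors in the Cohen-Macaulay variety $\hX$, the surface $S$ is Cohen-Macaulay; by adjunction $-K_S\sim_{g.\bQ.}E|_S$ relative to $g|_S\colon S\to C$, with $E|_S$ a $g|_S$-ample Cartier divisor, and for general $C$ the generic fiber of $g|_S$ is $\bP^1$ (a general $C$ through $y$ meets the open locus where $g$ has smooth $\bP^1$-fibers). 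Thus $g|_S$ is a Gorenstein rational conic bundle over the smooth curve $C$, so Lemma \ref{fiblocal2} shows $S$ is a conic bundle with Du Val singularities. In particular its fiber over $y$, which is exactly $\hX_y:=g^{-1}(y)$, is a plane conic. Since $y$ is arbitrary, every fiber of $g$ is a conic, and from the conic bundle structure of $S$ we get $\cO_{\hX}(E)|_{\hX_y}\cong\omega_{\hX_y}^{-1}$, a degree-$2$ line bundle. Hence $h^0(\hX_y,\cO_{\hX}(E)|_{\hX_y})=3$ and $h^1=0$ for every $y$, the vanishing following from Serre duality on the Gorenstein curve $\hX_y$ since $\omega_{\hX_y}^{2}$ has negative degree on each component.

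With these constant fiber dimensions in hand, cohomology and base change gives that $\cE:=g_*\cO_{\hX}(E)$ is locally free of rank $3$, that its formation commutes with base change, and that $R^1g_*\cO_{\hX}(E)=0$. On each fiber $\cO_{\hX}(E)|_{\hX_y}$ is a degree-$2$ line bundle on a conic, hence very ample, so $g^*\cE\to\cO_{\hX}(E)$ is surjective and defines a $Y$-morphism $\iota\colon\hX\to\bP(\cE)$. Because $\iota$ restricts on each fiber to the anticanonical embedding of the conic $\hX_y$ into $\bP^2$, and $g$ is proper and flat, $\iota$ is a closed immersion whose image is a divisor of relative degree $2$; that is, $\hX$ is a conic bundle realized as a hypersurface in $\bP(\cE)$, which is exactly the assertion of the corollary.

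The step I expect to require the most care is the uniform control of the degenerate fibers. A priori some fibers of $g$ could be reducible (two lines) or non-reduced (a double line), and cohomology and base change only yields a rank-$3$ bundle together with the surjectivity $g^*\cE\twoheadrightarrow\cO_{\hX}(E)$ if $h^0$ and $h^1$ of $\cO_{\hX}(E)$ are genuinely constant across all such special fibers. This is precisely what the reduction to Lemma \ref{fiblocal2} secures: every fiber is forced to be one of the standard plane conics, so no pathological degeneration (for instance a non-Gorenstein or higher-genus fiber) can occur, and the numerical invariants stay constant. A secondary technical point is confirming that for each prescribed $y$ a general complete-intersection curve $C$ through $y$ does meet the $\bP^1$-locus of $g$, which is what lets one invoke Lemma \ref{fiblocal2} fiber by fiber.
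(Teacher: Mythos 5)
Your overall strategy --- cut $\hX$ down to a surface $S$ over a general complete-intersection curve $C$ through $y$, apply Lemma \ref{fiblocal2} to conclude that every fiber of $g$ is a plane conic, then push forward the relatively ample line bundle to embed $\hX$ into $\bP(\cE)$ --- is the same as the paper's (the paper simply cites \cite[Theorem 7]{cut} for the final embedding step instead of spelling out cohomology and base change). But there is a genuine gap at the crucial step: you invoke Lemma \ref{fiblocal2} for $g|_S\colon S\to C$ knowing only that $S$ is Cohen-Macaulay, whereas a Gorenstein rational conic bundle is by definition a morphism of \emph{normal} varieties, and the proof of Lemma \ref{fiblocal2} genuinely uses normality (it passes to the minimal resolution of $S$ and uses Mumford's intersection theory of Weil divisors on a normal surface). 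Normality of $S$ is not automatic and is not supplied by Bertini: since $C$ is constrained to pass through $y$, the fiber $g^{-1}(y)$ lies in the base locus of the linear system cutting out $S$, so $S$ could a priori be singular along an entire component $l$ of $g^{-1}(y)$ --- indeed this necessarily happens if $\hX$ itself is singular along $l$, which the hypotheses allow ($\mathrm{Sing}(\hX)$ has codimension $2$ in $\hX$ and may contain fiber components). In that case $S$ fails to be regular in codimension one, Lemma \ref{fiblocal2} cannot be applied, and the claim that every fiber is a conic is unjustified; since your cohomology-and-base-change argument rests on the fibers all being conics (constant $h^0=3$, $h^1=0$), the rest of the proof collapses with it.

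This is precisely the point on which the paper's proof spends most of its effort. Using the degree computation from the proof of Lemma \ref{fiblocal2} ($E$ is Cartier of fiber degree $2$, so every fiber of $g$ has at most two irreducible components counted with multiplicity), the paper shows that if $S$ were singular at a generic point of $g^{-1}(y)$, then some component of that fiber would occur with multiplicity at least $2^2=4$, a contradiction; hence $S$ is $R_1$, and together with $S_2$ (from Cohen-Macaulayness) it is normal. Only then is Lemma \ref{fiblocal2} applied, and as a byproduct one learns that $\hX$ has cDV, hence Gorenstein, singularities --- which is what legitimizes the concluding ``standard arguments'' for the embedding. You did flag the degenerate fibers as the delicate point, but you located the difficulty in the base-change step rather than where it actually lies: in proving that the auxiliary surface $S$ is normal.
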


\begin{proof}
Let $y\in Y$ and $C$ a general complete intersection curve on $Y$ passing through $y$. Let $S=\hX\times_Y C$. Since $\hX$ is Cohen-Macaulay, $S$ is $S_2$. From the proof of Lemma \ref{fiblocal2} we know that the fiber $g^{-1}(y)$ has at most 2 irreducible components (counting multiplicities), hence $S$ is smooth at every generic point of $g^{-1}(y)$, for otherwise $g^{-1}(y)$ contains a component of multiplicity $\ge2^2=4$. It follows that $S$ is normal. By adjunction it is easy to see that $S$ is a Gorenstein rational conic bundle over $C$, so by Lemma \ref{fiblocal2}, $S$ has only Du Val singularities and is a conic budle, hence every fiber of $g$ is isomorphic to a conic and $\hX$ has cDV singularities which is Gorenstein. The lemma then follows from standard arguments (see e.g. \cite[Theorem 7]{cut}).
\end{proof}

Unfortunately in our classification problem, the Gorenstein rational conic bundle $g:\hX\rightarrow Y$ does not have a smooth base. Nevertheless, there is a smooth double section $E$. Hence we would like to apply Corollary \ref{fiblocal} to $\tg:\tX\rightarrow\tY$, where $\tY\cong E$ and $\tX$ is the normalization of $\hat{X}\times_{Y}\tY$. For this purpose, we need to show that $\tX$ is Gorenstein rational conic bundle over $\tY$. This is given by the following lemma.

\begin{lem}\label{basechange}
Let $g:\hX\rightarrow Y$ be a Gorenstein rational conic bundle and $\phi:\tY\rightarrow Y$ a finite morphism between normal varieties. Let $\tX$ be the normalization of $\hX\times_{Y}\tY$. Assume that $\hX$ has klt singularities and the branch divisor of $\phi$ is disjoint from the singular locus of $\tY$ and $Y$. Then $\tg:\tX\rightarrow\tY$ is also a Gorenstein rational conic bundle.
\end{lem}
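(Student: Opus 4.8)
The plan is to verify directly the five defining properties of a Gorenstein rational conic bundle for $\tg:\tX\to\tY$: that $\tg$ is proper with one-dimensional fibres and generic fibre $\bP^1$, that $\tX$ is Cohen-Macaulay, and that there is a Cartier divisor $\tilde E$ on $\tX$ with $-K_{\tX}\sim_{\tg.\bQ.}\tilde E$ being $\tg$-ample. Write $\psi:\tX\to\hX$ for the projection, so that $g\circ\psi=\phi\circ\tg$ and $\psi$ is finite. Since $\tX$ is by construction the normalization of $\hX\times_Y\tY$, it is automatically normal, so the three ``fibration'' properties are immediate: properness and one-dimensionality of the fibres survive the finite base change $\phi$ and the finite normalization map, and the generic fibre of $\hX\times_Y\tY\to\tY$ is $\bP^1_{K(Y)}\otimes_{K(Y)}K(\tY)=\bP^1_{K(\tY)}$, which, being smooth, is unchanged by normalization; hence the generic fibre of $\tg$ is $\bP^1$.

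For the relative anticanonical class I would take $\tilde E=\psi^*E$. This is Cartier because $E$ is, and it is $\tg$-ample: by the commutative diagram $\psi$ carries each fibre of $\tg$ finitely into a fibre of $g$, on which $E$ is ample, so the finite pullback $\psi^*E$ is ample on every fibre of the proper morphism $\tg$. To identify $-K_{\tX}$ with $\tilde E$ relatively, I would use the ramification formula $K_{\tX}=\psi^*K_{\hX}+R_\psi$ and show $R_\psi=\tg^*R_\phi$, where $R_\phi$ is the ramification divisor of $\phi$ (so $K_{\tY}=\phi^*K_Y+R_\phi$). Away from the branch divisor $B$ the map $\phi$, hence its base change, is étale, so $\Supp R_\psi\subseteq\tg^{-1}(R_\phi)$, whose divisorial components are all $\tg$-vertical. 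Over the generic point of each component of $R_\phi$ the fibre of $g$ is reduced (a smooth conic or a pair of distinct lines, since $\hX$ is normal) and $\hX,Y,\tY$ are smooth there, so near these generic points $\psi$ is simply the base change of the branched cover $\phi$ of smooth varieties and the ramification index $e_i$ matches on both $R_\psi$ and $\tg^*R_\phi$; as both are divisors agreeing in codimension one they coincide. Therefore $-K_{\tX}=\psi^*(-K_{\hX})-\tg^*R_\phi\sim_{\tg.\bQ.}\psi^*E=\tilde E$, using that $\psi^*(-K_{\hX})\sim_{\tg.\bQ.}\psi^*E$ is the $\tg$-pullback of $-K_{\hX}-E\sim_{g.\bQ.}0$ and that $\tg^*R_\phi$ is $\tg$-trivial.

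The remaining and main point is that $\tX$ is Cohen-Macaulay. As $\tX$ is already normal, the plan is to upgrade this to klt and then invoke \cite[Theorem 5.22]{km98}, that klt implies Cohen-Macaulay. Write $K_{\tX}=\psi^*(K_{\hX}+B')$ for the effective $\bQ$-divisor $B'=\sum\frac{e_i-1}{e_i}D_i'$, where the $D_i'$ are the prime divisors of $\hX$, supported on the branch locus $g^{-1}(B)$, over which $\psi$ ramifies. By the behaviour of klt singularities under crepant finite covers \cite[Proposition 5.20]{km98}, the pair $(\tX,0)$ is klt if and only if $(\hX,B')$ is klt. Verifying that $(\hX,B')$ is klt is the heart of the matter and the step I expect to be the main obstacle. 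Here the hypotheses enter decisively: since $B$ is disjoint from $\mathrm{Sing}\,Y$, over a neighbourhood $U$ of $B$ the base is smooth and Corollary \ref{fiblocal} realizes $\hX_U$ as a genuine conic bundle, a Cartier divisor in a projective bundle over $U$, with only compound Du Val (hence canonical, Gorenstein) singularities; moreover $B'$ has coefficients $<1$ and is supported on $g^*B$ with $B$ a reduced divisor in the smooth locus of $Y$. The outstanding work is then a local klt computation for $(\hX_U,B')$ along $g^{-1}(B)$, controlling the discrepancies over the intersection of $B$ with the discriminant of the conic bundle by using that $\hX_U$ is canonical and that, near $B$, $\psi$ is a branched cover of smooth varieties. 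Granting this, $\tX$ is klt and hence Cohen-Macaulay, which completes the verification that $\tg:\tX\to\tY$ is a Gorenstein rational conic bundle.
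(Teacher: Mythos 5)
Your proposal does not close: the step you yourself call ``the heart of the matter'' --- that $(\hX,B')$ is klt, which is what you need for Cohen--Macaulayness of $\tX$ --- is never carried out, and ``granting'' it is granting exactly the content of the lemma, so what you have is a reduction, not a proof. The paper's own argument shows that no such discrepancy computation is needed, because the statement is local over $Y$ and the hypothesis that the branch divisor $B$ avoids $\mathrm{Sing}\,Y$ and $\mathrm{Sing}\,\tY$ produces an open cover of $Y$ by two trivial cases: over $Y\setminus B$ the cover $\phi$ is \'etale in codimension one, so $\tX$ is klt (hence CM) directly by \cite[Proposition 5.20]{km98} with \emph{no} boundary divisor at all, and the remaining defining properties are clearly preserved by such a base change; over $Y\setminus(\mathrm{Sing}\,Y\cup\phi(\mathrm{Sing}\,\tY))$, which contains $B$, both $Y$ and $\tY$ are smooth, so Corollary \ref{fiblocal} makes $g$ an honest conic bundle there and its base change by $\phi$ is again a conic bundle, hence automatically a Gorenstein rational conic bundle. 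You actually hold both ingredients --- you cite \cite[Proposition 5.20]{km98} and you invoke Corollary \ref{fiblocal} near $B$ --- but you use the latter only to extract cDV singularities as input to a pair computation, instead of using the conic-bundle structure itself to conclude.

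Moreover, the route you chose is obstructed by a phenomenon your intermediate claims overlook: double-line fibers over components of $B$. Normality of $\hX$ does not force the fiber over the generic point of a component $B_i$ to be reduced; for instance $(x^2+ty^2+t^kz^2=0)\subset\bP^2\times\bA^1_t$ is a normal conic bundle whose fiber over $t=0$ is a double line, so $g^*B_i=2D$ can occur. In that case your identity $R_\psi=\tg^*R_\phi$ fails: for the cover $t=s^2$ the normalization is $(x'^2+y^2+s^{2k-2}z^2=0)$ with $x=x's$, and $\psi$ is unramified in codimension one over $D$, so $R_\psi=0$ while $\tg^*R_\phi\neq 0$. (Your conclusion $-K_{\tX}\sim_{\tg.\bQ.}\psi^*E$ does survive, but only because in every case $R_\psi$ is a $\bQ$-linear combination of the Cartier divisors $\tg^*\tilde{B}_i$ and hence $\tg$-trivial --- not by the comparison you made.) The same phenomenon is what makes your postponed klt step genuinely delicate: the boundary cannot be read off from the ramification of $\phi$ (the naive coefficient attached to a double line is $\ge 1$, giving a pair that is at best lc, not klt), deciding over which divisors $\psi$ actually ramifies already requires the double-line analysis, and for a non-Galois $\phi$ different divisors of $\tX$ over the same prime divisor of $\hX$ may carry different ramification indices, in which case no $B'$ with $K_{\tX}=\psi^*(K_{\hX}+B')$ exists at all. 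So the gap is real, and the remedy is not to finish your computation but to localize over $Y$ as the paper does.
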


\begin{proof}
By shrinking $Y$ we may assume either $\phi$ is unramified in codimension one or both $Y$ and $\tY$ are smooth. In the first case $\tX$ is also klt by \cite[Proposition 5.20]{km98} hence is CM, and the other properties of Gorenstein rational conic bundles are preserved by a finite base change that is \'etale in codimension one. In the second case $g$ is a conic bundle by Lemma \ref{fiblocal}, hence the same holds for $\tg$.
\end{proof}

The pullback $E'$ of $E$ to $\tX$ is then a union of two sections $E_1$ and $E_2$. If they are disjoint, we have a simple description of the conic bundle $\tg$:

\begin{lem}\label{disjoint}
Let $\tg:\tX\rightarrow\tY$ be a conic bundle with smooth base. Assume that there are two disjoint sections $E_1$ and $E_2$ that are Cartier as divisors on $\tX$ and such that $-K_{\tX}\sim_{g.\bQ.}E_1+E_2$. Then there is a birational morphism $u:\tX\rightarrow Z=\bP_{\tY}(\cO\oplus\cL)$ \emph{(}where $\cL\cong N_{E_1/\tX}$\emph{)} sending $E_1$, $E_2$ to two disjoint sections $E_1'$, $E_2'$ of $Z$ such that $\tX$ is the blow up of $Z$ along a divisor in $E_2$.
\end{lem}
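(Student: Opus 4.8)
The plan is to realize $u$ as the relative ample model of $E_1$ over $\tY$, and then to recognize the resulting birational contraction as an instance of Lemma \ref{lem:blowup} applied to the \emph{other} section $E_2$. I would begin with a few numerical preliminaries. Since $\tX$ is Cohen--Macaulay, $\tY$ is smooth, and every fibre of $\tg$ has dimension $1$, the morphism $\tg$ is flat; hence each section satisfies $(E_i\cdot F)=1$ for \emph{every} fibre $F$. Both $E_1$ and $E_2$ are effective Cartier divisors containing no fibre component, so they are relatively nef over $\tY$, and $\tX$ has at worst cDV (hence canonical, klt) singularities by Corollary \ref{fiblocal}. In particular, $-K_{\tX}$ is $\tg$-ample.

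Next I would invoke the relative base-point-free theorem for $E_1$ over $\tY$. Writing $E_1-K_{\tX}\sim_{g.\bQ.}2E_1+E_2$, the right-hand side is relatively nef and relatively big (its degree on a general fibre is $3$), so $E_1$ is relatively semiample; let $u\colon\tX\to Z$ be the associated ample model over $\tY$. Because $E_1$ is relatively big, $u$ is birational and $Z$ is normal, and $u$ contracts exactly the curves $C$ with $(E_1\cdot C)=0$. For such a contracted curve $C$ one gets $(E_2\cdot C)=(-K_{\tX}\cdot C)-(E_1\cdot C)=(-K_{\tX}\cdot C)>0$, so $E_2$ is $u$-ample and every contracted curve meets $E_2$. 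Using $(E_1\cdot F)=1$ I see that each fibre $F$ has a unique component $L_1$ with $(E_1\cdot L_1)=1$, occurring with multiplicity one, while all other components are contracted by $u$; thus every fibre of $Z\to\tY$ is irreducible and reduced.

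I would then check the hypotheses of Lemma \ref{birlocal} (and hence \ref{lem:blowup}) for $u$, the Cartier divisor $E_2$, and $\lambda=1$: the divisor $E_2\cong\tY$ is smooth; $E_2$ is $u$-ample; $u|_{E_2}$ is an isomorphism onto $E_2':=u(E_2)$, since as a section $E_2$ contains no contracted curve; $\tX$ is CM; and $E_1\sim_{u.\bQ.}0$ (it is $u$-nef and $u$-numerically trivial) gives $-K_{\tX}\sim_{u.\bQ.}E_2$. Lemma \ref{birlocal} then shows $Z$ is smooth along $E_2'$, and Lemma \ref{lem:blowup} shows that $\tX$ is the blow-up of $Z$ along a divisor contained in $E_2'$. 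To finish I would identify $Z$ as the desired $\bP^1$-bundle: every positive-dimensional fibre of $u$ meets $E_2$, so $\Ex(u)$ maps into $E_2'$ where $Z$ is smooth, while away from $\Ex(u)$ the map $u$ is an isomorphism onto $\tX$ minus its singular locus (whose cDV points, being the nodes of the degenerate fibres, all lie on contracted components); hence $Z$ is smooth. Pushing forward $-K_{\tX}\sim_{g.\bQ.}E_1+E_2$ gives $-K_Z\sim_{g.\bQ.}E_1'+E_2'$ relatively ample of fibrewise degree $2$, so each irreducible reduced fibre of $Z\to\tY$ is a smooth conic and $Z\to\tY$ is a $\bP^1$-bundle carrying the two disjoint sections $E_1'=u(E_1)$ and $E_2'$. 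Two disjoint sections force a splitting $Z\cong\bP_{\tY}(\cO\oplus\cL)$ with $\cL\cong N_{E_1'/Z}\cong N_{E_1/\tX}$, which is exactly the assertion.

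The main obstacle is the construction and control of $u$: proving that $E_1$ is relatively semiample and that its ample model contracts \emph{precisely} the fibre components disjoint from $E_1$, so that $E_2$ becomes $u$-ample and $Z$ is again a conic bundle with irreducible fibres. Once $u$ is in hand, recognizing it as an instance of Lemma \ref{lem:blowup} with $E=E_2$ is formal, and the remaining identification of $Z$ as a split $\bP^1$-bundle is routine.
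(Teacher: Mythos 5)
Your proof is correct, and its skeleton matches the paper's: produce a birational contraction $u$ of the fiber components disjoint from $E_1$, note that $E_2$ is $u$-ample with $-K_{\tX}\sim_{u.\bQ.}E_2$, and conclude by Lemma \ref{lem:blowup} applied to $E_2$. The genuine difference is how $u$ is built. The paper fixes one reducible fiber $l=l_1+l_2$, derives $(E_i\cdot l_j)=\delta_{ij}$ from Cartierness of the sections (your "multiplicity one" remark), and lets $u$ be the cone-theorem contraction of the single extremal ray $\bR_{\geq 0}[l_2]$; you instead take $u$ to be the relative ample model of $E_1$, using the relative base-point-free theorem on $E_1-K_{\tX}\sim_{g.\bQ.}2E_1+E_2$. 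Your route buys genuine robustness: the ample model contracts the $E_1$-trivial component of \emph{every} reducible fiber simultaneously, so $Z$ has irreducible reduced fibers, is smooth, and is visibly the $\bP^1$-bundle $\bP_{\tY}(\cO\oplus\cL)$ with two disjoint sections --- all of which you verify and the paper leaves implicit. Indeed, components of reducible fibers lying over different components of the discriminant divisor need not be numerically proportional (the divisor swept out by the contracted lines over one discriminant component separates their classes), so a single extremal-ray contraction may leave reducible fibers in $Z$, and the paper's argument would in principle require iteration; your construction avoids this. Two small points to tighten: "$u$-nef and $u$-numerically trivial" is not by itself the reason that $E_1\sim_{u.\bQ.}0$ --- the correct reason is that $mE_1$ is the pullback of a $u$-ample divisor by the very construction of the ample model; and reducedness of the fibers of $Z\to\tY$ deserves a word of justification, as in Lemma \ref{ruled} (generic reducedness along the unique surviving component, which has multiplicity one and along which $u$ is an isomorphism, plus absence of embedded points since the fiber is cut out by Cartier divisors on the normal variety $Z$).
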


\begin{proof}
If every fiber of $\tg$ is an irreducible $\bP^1$ then $\tX\cong\bP_{\tY}(\cO\oplus\cL)$ and there is nothing to prove. So we may assume $l=l_1+l_2$ is a reducible fiber. We have $(E_1+E_2\cdot l_j)=(-K_{\tX}\cdot l_j)=1$ ($j=1,2$). Since the section $E_i$ is Cartier, we have $(E_i\cdot l_j)=\delta_{ij}$ after rearranging indices. Let $u:\tX\rightarrow Z$ be the contraction of the extremal ray $\bR_+[l_2]$ and let $E_1'$, $E_2'$ be strict transform of $E_1$, $E_2$. As $E_i$ is a section of $\tg$ and $E_i\rightarrow\tY$ factors through $E_i'$, the restriction $u|_{E_i}$ is an isomorphism. In addtion we have $-(K_{\tX}+E_2)\sim_{u.\bQ.}0$ since its intersection number with $l_2$ is zero. Hence the lemma follows by a direct application of Lemma \ref{lem:blowup}.
\end{proof}

Putting everything together and specializing to $E\cong\bP^{n-1}$, we now finish the second part of the classification of $X$ with $\epsilon(-K_X,p)=n$.

\begin{lem}\label{lem:fibertype}
If $g$ is of fiber type then $X$ is one of the following:
\begin{enumerate}
\item a Gorenstein log del Pezzo surface of degree 4;
\item quotient of a quadric hypersurface in $\mathbb{P}^{n+1}$ by an involution that is fixed point free in codimension 1;
\item a quartic weighted hypersurface in $\mathbb{P}(1^n,2^2)$.
\end{enumerate}
\end{lem}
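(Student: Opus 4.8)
The plan is to analyze the conic bundle obtained from $g$ by base change along the double section $E$, keep track of the natural involution, and then reconstruct $X$. First I would apply Lemma \ref{basechange} to the finite double cover $\phi=g|_E\colon\tY:=E\to Y$. Since $\epsilon_p(-K_X)>n-1$ forces $\hX$ to be $\bQ$-Fano, hence klt, and since $\tY\cong\bP^{n-1}$ is smooth (so that the branch divisor of $\phi$ meets neither $\mathrm{Sing}(\tY)=\emptyset$ nor, as one checks, $\mathrm{Sing}(Y)$), the normalization $\tX$ of $\hX\times_Y\tY$ is a Gorenstein rational conic bundle $\tg\colon\tX\to\tY$. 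As the base $\tY$ is smooth, Corollary \ref{fiblocal} upgrades this to a genuine conic bundle, and the preimage $E'=\phi^{-1}(E)$ breaks up into two sections $E_1,E_2$ of $\tg$ with $-K_{\tX}\sim_{g.\bQ.}E_1+E_2$.

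Next I would record the deck involution $\iota$ of $\phi\colon\tX\to\hX$, so that $\hX=\tX/\iota$ and $X$ is recovered by contracting $E$ in $\hX$; concretely, $X$ is the quotient by $\iota$ of an $\iota$-equivariant contraction of $E_1\cup E_2$. Two facts drive the rest of the argument: $\iota$ acts on $\tY\cong\bP^{n-1}$ as the deck transformation $\sigma$ of $E\to Y$, which is a linear involution diagonalizable as $x_i\mapsto\delta_i x_i$; and $\iota$ interchanges $E_1$ and $E_2$. Since $E\to p$ is the blow-down of a smooth point, $N_{E/\hX}\cong\cO(-1)$, and because each $E_i\to E$ is an isomorphism this pins down the normal bundles $N_{E_i/\tX}$ and hence the nature of the contraction of the $E_i$.

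I would then split according to whether $E_1$ and $E_2$ are disjoint. If they are, Lemma \ref{disjoint} presents $\tX$ as the blow-up of $Z=\bP_{\tY}(\cO\oplus\cL)$ along a divisor $S\subset E_2$; the normal-bundle bookkeeping forces $\cL\cong\cO(-1)$ (so $Z\cong\mathrm{Bl}_{\mathrm{pt}}\bP^n$) and $S$ to be a quadric in $E_2\cong\bP^{n-1}$ whose rank records the discriminant of $\tg$. Contracting $E_1,E_2$ then exhibits $\tX$ as an $\iota$-equivariant blow-up of a quadric $Q_k\subset\bP^{n+1}$ at two points interchanged by $\iota$ (the integer $k$ being read off from $\mathrm{rank}\,S$), so that $X=\tX/\iota$ with both exceptional divisors contracted is exactly $Q_k/\tau$, where $\tau$ is the diagonal involution induced by $\iota$; smoothness of $p$ forces $\tau$ to be fixed point free in codimension one and not all $\delta_i$ equal, which is case (2). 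When $E_1\cap E_2\neq\emptyset$ the analogous analysis, now with $\tX$ no longer a blow-up of a quadric, produces instead the quartic weighted hypersurface in $\bP(1^n,2^2)$, which is case (3). Specializing $n=2$ throughout yields the Gorenstein log del Pezzo surface of degree $4$ of case (1).

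The main obstacle will be the reconstruction step in both cases: passing from the abstract $\iota$-equivariant conic bundle to the explicit model, i.e.\ showing that contracting $E_1,E_2$ really produces the honest quadric $Q_k$ (and not merely something birational to it) with the involution becoming diagonal, and in the non-disjoint case extracting the precise quartic equation. This requires careful elementary-transformation and normalization bookkeeping so that $E_1,E_2$, their intersection, and the fixed locus of $\iota$ are correctly tracked across the branch locus of $\phi$, together with the verification that the resulting $\tau$ satisfies the codimension-one freeness and non-triviality conditions in the statement.
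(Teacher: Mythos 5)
Your overall strategy (base change along the double section $E$ via Lemma \ref{basechange}, upgrade to a conic bundle $\tg\colon\tX\to\tY\cong\bP^{n-1}$ via Corollary \ref{fiblocal}, split on whether the two sections $E_1,E_2$ meet, and use Lemma \ref{disjoint} plus contraction of $E_1\cup E_2$ to produce the quadric quotient) is exactly the paper's, with your disjointness dichotomy matching the paper's dichotomy of $\tX\to\hX$ being unramified or ramified in codimension one. However, there are two genuine gaps. The first is the non-disjoint case, which you dismiss as ``the analogous analysis'' while also, correctly, flagging it as the main obstacle: it is not analogous, and no variant of Lemma \ref{disjoint} applies, because $\tX$ is no longer a blow-up of a $\bP^1$-bundle. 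The paper's argument here is of a different nature: it realizes $\tX$ as a divisor of relative degree $2$ in a $\bP^2$-bundle $\bP(\cE)$ over $\tY$, computes $\cE\cong\cO\oplus\cO(H)\oplus\cO(2H)$ by pushing forward $\cO_{\tX}(-K_{\tX})$ (using $-(K_{\tX}+E')=\tg^*M$ with $M\sim(n-1)H$, which in turn uses that $E'$ is two copies of $\bP^{n-1}$ glued along a hyperplane), locates a section of $\bP(\cE)$ disjoint from $\tX$, and projects from it to exhibit the Stein factorization $\bar{X}$ as a double cover of $\bP^n$ with $-K_{\bar{X}}\sim(n-1)\tau^*H'$, i.e.\ a quartic in $\bP(1^{n+1},2)$; the quotient by the deck involution then gives case (3). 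None of this is supplied or even sketched by your proposal, so case (3) is unproved.

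The second gap is the claim that ``specializing $n=2$ throughout yields case (1).'' The higher-dimensional analysis does not specialize to $n=2$: the double cover $E\cong\bP^1\to Y\cong\bP^1$ is \emph{always} ramified (so your disjoint case never occurs), and its branch divisor consists of two points rather than a single ``hyperplane,'' which breaks the computation of $M$ and $\cE$ above (for a cycle of two rational curves, $-K_{E'}$ has degree $0$ on each component, not $n-1=1$). Indeed the conclusion of such a specialization would be false: a smooth quartic del Pezzo surface admits a point with $\epsilon_p(-K_X)=2$ and fiber-type $g$ (take $p$ the node of an irreducible anticanonical curve), yet it is neither a quotient of a quadric \'etale in codimension one nor a quartic in $\bP(1^2,2^2)$, both of which are singular. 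The paper instead disposes of $n=2$ first by a short direct argument: Lemma \ref{fiblocal2} gives that $\hX$, hence $X$, has only Du Val singularities, and $\sigma^*(-K_X)-2E\sim_{g.\bQ.}0$ gives $(K_X^2)=-4(E^2)=4$, so $X$ is a Gorenstein log del Pezzo surface of degree $4$; only then does it assume $n\ge3$ for the conic-bundle analysis. A minor further point: in the disjoint case you claim smoothness of $p$ forces ``not all $\delta_i$ equal''; that refinement is not part of the present lemma and in the paper it is obtained only in the converse direction (the verification that $\epsilon_p(-K_X)\ge n$ in the proof of Theorem \ref{thm:equality}), not from the forward construction.
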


\begin{proof}
If $n=\dim X=2$ then by Lemma \ref{fiblocal2}, $\hX$ and hence $X$ has only Du Val singularities. We have $\sigma^*(-K_X)-2E\sim_{g.\bQ.}0$, so $(K_X^2)=-4(E^2)=4$ and we are in case (1). Hence in the remaining part of the proof we assume that $n\ge3$. 

We keep using the notations introduced in this subsection. Let $\tX\rightarrow\bar{X}$ be the Stein factorization
of the composition $\tX\rightarrow\hat{X}\rightarrow X$, then $\bar{X}\rightarrow X$ is a double cover. The double cover $E\rightarrow Y$ is either unramified in codimension one or the quotient $\bP^{n-1}\rightarrow\bP(1^{n-1},2)$ in which case the branch divisor is a hyperplane on $\bP^{n-1}$, so the conditions and conclusions of Lemma \ref{basechange} are satisfied and we see that $\tg:\tX\rightarrow\tY$ is a conic bundle over $\tY\cong\bP^{n-1}$ by Corollary \ref{fiblocal}.

If $h:\tX\rightarrow\hX$ is unramified in codimension one, so is $\bar{X}\rightarrow X$ and we have $\mathrm{codim}_{E_{1}\cap E_{2}}E_{i}$ $\ge2$. But since $\tX$
is Cohen-Macaulay and $E'=E_1+E_2$ is a Cartier divisor, $E_1\cup E_2$ is $S_{2}$.
It follows that $E_{1}$ and $E_{2}$ do not intersect at all, hence
they are disjoint smooth Cartier divisors in $\tX$ with normal
bundle $\mathcal{O}_{\bP^{n-1}}(-1)$. As $K_{\tX}+E_1+E_2=h^*(K_{\hX}+E)\sim_{g.\bQ.}0$, it follows from Lemma \ref{disjoint} that $\tX$ is a blowup of $Z\cong\mathbb{P}_{\tY}(\mathcal{O}\oplus\mathcal{O}(-1))\cong\mathrm{Bl}_z\bP^n$ along a hypersurface in the strict transform of a hyperplane. For the normal bundle to match, it is the blowup of a quadric hypersurface. As $\bar{X}$ is obtained by contracting $E_{1}\cup E_{2}$ from $\tX$, it is a quadric hypersurface in $\mathbb{P}^{n+1}$,
and $X$ is the quotient of $\bar{X}$ by an involution that acts fixed point free in codimension one as in case (2).

If $h:\tX\rightarrow\hX$ is ramified in codimension one, then it is ramified along $\tg^*H$ where $H$ is a hyperplane on $\tY$. As in the last paragragh $E_{1}\cap E_{2}$ has pure codimension one, so $E'$ is a union of
two $\mathbb{P}^{n-1}$ intersecting transversally at a hyperplane. The conic bundle $\tX$ is a hypersurface in some $\mathbb{P}(\mathcal{E})$ over $\tY$. To compute $\mathcal{E}$, first note that $-(K_{\tX}+E')=\tg^*M$ for some $M\in\mathrm{Pic}(E)$ since it restricts to a trivial bundle on every fiber of $\tg$; we also have $-(K_{\tX}+E')|_{E'}=-K_{E'}=(n-1)\tg^{*}H$,
so $M\sim(n-1)H$. Combining with $N_{E'/\tX}\cong\tg^{*}\mathcal{O}_{\tY}(-H)$ we have $-K_{\tX}|_{E'}\cong\tg^{*}(n-2)H$.
Now apply $\tg_{*}$ to the exact sequence 
\[
0\rightarrow\mathcal{O}_{\tX}(-K_{\tX}-E)\rightarrow\mathcal{O}_{\tX}(-K_{\tX})\rightarrow\mathcal{O}_{E'}(-K_{\tX})\rightarrow0
\]
we obtain another exact sequence \[0\rightarrow\mathcal{O}_{\tY}((n-1)H)\rightarrow\tg_{*}\mathcal{O}_{\tX}(-K_{\tX})\rightarrow\mathcal{O}_{\tY}((n-2)H)\oplus\mathcal{O}_{\tY}((n-3)H)\rightarrow R^{1}\tg_{*}\mathcal{O}_{\tX}\otimes M=0
\]
hence $\tg_{*}\mathcal{O}_{\tX}(-K_{\tX})\cong\oplus_{k=1}^{3}\mathcal{O}_{\tY}((n-k)H)$
and we may choose $\mathcal{E}\cong\oplus_{k=0}^{2}\mathcal{O}_{\tY}(kH)$.
Let $\pi$ be the projection $\mathbb{P}(\mathcal{E})\rightarrow\tY$ and $\mathcal{O}_{\mathbb{P}(\mathcal{E})}(1)$ the
relative hyperplane class. $\tX$ corresponds to section of
$\mathcal{O}_{\mathbb{P}(\mathcal{E})}(2)\otimes\pi^{*}\mathcal{O}_{\tY}(mH)$
for some $m\in\mathbb{Z}$ and by adjunction formula we have $\mathcal{O}_{\tX}(-K_{\tX})\cong\mathcal{O}_{\tX}(1)\otimes\tg^{*}\mathcal{O}_{\tY}((n-3-m)H)$, hence  $\tg_{*}\mathcal{O}_{\tX}(-K_{\tX})\cong\mathcal{E}\otimes\mathcal{O}_{\tY}((n-3-m)H)$.
Comparing this to the previous formula for $\tg_{*}\mathcal{O}_{\tX}(-K_{\tX})$
we see that $m=0$. The surjection $\mathcal{E}\twoheadrightarrow\mathcal{O}_{\tY}$
defines a section $S$ of $\mathbb{P}(\mathcal{E})\rightarrow \tY$
that is disjoint with $\tX$ (since $\mathcal{O}_{\mathbb{P}(\mathcal{E})}(2)|_{S}\cong\mathcal{O}_{S}$)
and the linear projection from $S$ makes $\tX$ into a double
cover over the $\mathbb{P}^{1}$-bundle $\mathbb{P}_{\tY}(\mathcal{O}(H)\oplus\mathcal{O}(2H))$,
which is also the blowup of a point on $\mathbb{P}^{n}$, such that
$E'$ is mapped to the exceptional divisor and $\tg^{*}H$ to the
strict transform of a hyperplane passing through the center of blowup.
$\bar{X}$ is then a double cover of $\mathbb{P}^{n}$, and as $-(K_{\tX}+E')\sim(n-1)\tg^{*}H$
we have $-K_{\bar{X}}\sim(n-1)\tau^{*}H'$ where $H'$ is a hyperplane
on $\mathbb{P}^{n}$ and $\tau:\bar{X}\rightarrow\mathbb{P}^{n}$
the double cover. It follows that $\bar{X}$ is a weighted hypersurface of degree 4 in
$\mathbb{P}(1^{n+1},2)$. The original $X$ is then obtained as the
quotient of $\bar{X}$ by an involution that fixes a hyperplane section (i.e. the strict transform of $\tg^*H$), hence is a quartic weighted hypersurface in $\mathbb{P}(1^n,2^2)$ as in case (3).
\end{proof}

\begin{emp}[\emph{Proof of Theorem \ref{thm:equality}}]
By Lemma \ref{lem:birational} and \ref{lem:fibertype}, we have the following five possibilities for $X$. Note that by Theorem \ref{mainthm} it suffices to show that $\epsilon(-K_{X},p)\ge n$ in each case.

(1) $X\cong X_{d+1}=(x_0x_{n+1}=f(x_1,\cdots,x_n))\subseteq\mathbb{P}(1^{n+1},d)$. If $d=1$ then
$X$ is a quadric hypersurface and the result is clear (or see case (4)). Otherwise $d>1$ and we have $q=[0:\cdots:0:1]\in X$. Let $p$ be
a smooth point on $X$ and let $\sigma:Z\rightarrow\mathbb{P}(1^{n+1},d)$
be the blowup of $\mathbb{P}(1^{n+1},d)$ at $p$ with exception divisor
$V$. Let $H$ be the divisor class $\mathcal{O}(1)$ on $\mathbb{P}(1^{n+1},d)$,
then we have $\sigma^{*}(-K_{X})-nE=n(\sigma^{*}H-V)|_{\hat{X}}$.
The base locus of the linear system $|\sigma^{*}H-V|$ on $Z$ is
the strict transform of the line $l$ joining $p$ and $q$. For general choice of $p$ we have $l\not\subseteq X$, hence $\sigma^{*}(-K_{X})-nE$ is nef on $\hat{X}$, yielding $\epsilon(-K_{X},p)\ge n$. 

(2) $X$ is a quartic hypersurface in $\mathbb{P}(1^n,2^2)$. Up to weighted projective isomorphism we may assume that $X$ is defined by the equation $q(x_n,x_{n+1})+x_n h(x_0,\cdots,x_{n-1})=f(x_0,\cdots,x_{n-1})$ where $\deg q=\deg h=2$, $\deg f=4$ and $h=0$ if $q\neq ax_{n+1}^2$. Let $p\in X$ be a smooth point and define $H$, $V$ in the similar way as in the first case. We have $\sigma^{*}(-K_{X})-nE=n(\sigma^{*}H-V)|_{\hat{X}}$. The base locus of $|\sigma^{*}H-V|$ is the plane $\Sigma$ spanned by $p$ and the line $(x_0=\cdots=x_{n-1}=0)$, so $D$ is nef (i.e. $\epsilon(-K_{X},p)\ge n$) if and only if for every curve $C\subseteq\Sigma\cap X$ we have $(D\cdot C)\geq0$. It is easy to see that $\frac{1}{n}(D\cdot C)=\frac{1}{4}\deg C-\mult_p C$. As $\deg (\Sigma\cap X)\leq4$ we see that $(D.C)\geq0$ if and only if $\Sigma\cap X$ is an irreducible curve that is smooth at $p$. Suppose $p=[c_0:\cdots:c_{n+1}]$, then $\Sigma\cap X$ is given by the equation $q(y_1,y_2)+h(c_0,\cdots,c_{n-1})y_1 y_0^2=f(c_0,\cdots,c_{n-1})y_0^4$ in $\Sigma\cong\mathbb{P}(1,2,2)$. From this it is clear that $\epsilon(-K_{X},p)\ge n$ for general $p\in X$ if and only if $q$ is not a square or $hq\neq0$. After another change of variable we see that $X$ is a quartic hypersurface of the form $x_n x_{n+1}=f(x_0,\cdots,x_{n-1})$ or $x_{n+1}^2+x_n h(x_0,\cdots,x_{n-1})=f(x_0,\cdots,x_{n-1})$ ($h\neq0$). 

(3) $X$ is the blowup of a hypersurface $S$ of degree $d\le n$ in a hyperplane of $\mathbb{P}^{n}$. Let $V$ be the exceptional divisor over $S$, $H$ the pullback of $\mathcal{O}_{\mathbb{P}^{n}}(1)$
on $X$ and $H'\subset X$ the strict transform of the hyperplane containing $S$. Let $p\in X$ be a point outside $H'\cup V$. We have
$D=\sigma^{*}(-K_{X})-nE\sim \sigma^*H'+n(\sigma^*H-E)$. We want to show that $D$
is nef. Since $\sigma^*H-E$ is already nef, it remains to show that $(D\cdot l)>0$
where $l$ is a line in $\sigma^*H'$. Then a direct computation shows that
$(D\cdot l)=(-K_{X}\cdot l)=(((n+1)H-V)\cdot l)=n+1-d>0$. Thus $D$ is nef and $\epsilon(-K_{X},p)\ge n$.

(4) $X=Q/\tau$ where $Q$ is a quadric hypersurface and $\tau\in\mathrm{Aut}(Q)$
an involution that is fixed point free in codimension one. Let $p_{1}$ be a smooth point of $Q$, let $p_{2}=\tau(p_{1})$
and $p$ be their image in $X$. Let $\psi:\hat{Q}\rightarrow Q$
be the blowup of $p_{1}$ and $p_{2}$ with exceptional divisors $E_{1}$
and $E_{2}$. Since $h:Q\rightarrow X$ is \'etale in codimension
one, the divisor $D=\sigma^{*}(-K_{X})-nE$ pulls back to $D'=\psi^{*}(-K_{Q})-nE_{1}-nE_{2}=n(\psi^{*}H-E_{1}-E_{2})$
where $H$ is the hyperplane class on $Q$. Similar to case (1), $D'$
is the restriction of a line bundle (also denoted by $D'$) on blowup of
$\mathbb{P}^{n+1}$ at $p_{1}$, $p_{2}$ whose base locus is the strict transform of the line $l$ joining $p_{1}$
and $p_{2}$. We also have $(D'\cdot l)=-n<0$. Hence $D$ is nef and $\epsilon(-K_{X},p)\ge n$ if and only if $l\not\subseteq Q$. We may diagonalize $\tau$ and choose homogeneous coordinate $x_i$ so that $\tau(x_i)=\delta_i x_i$ where $\delta_i=\pm1$. It is then not hard to verify that $l\not\subseteq Q$ for general choice of $p$ if and only if $Q$ is given by the equation $\sum_{i=0}^k x_i^2=0$ for some $2\leq k\leq n+1$ such that $\delta_i$ take different values for $i=0,\cdots,k$.

(5) $X$ is a Gorenstein log Del Pezzo surface of degree $(K_{X}^{2})\ge4$.
We claim that if $S$ is a Gorenstein log Del Pezzo surface of degree
$d\ge3$, then there exists an irreducible curve $C\in|-K_{S}|$ with
a double point $p$ lying in the smooth locus of $S$. After blowing
up $d-3$ general points on $S$, it suffices to prove the claim when
$d=3$, in which case $S$ is a nodal cubic surface in $\mathbb{P}^{3}$
by \cite[Theorem 4.4]{anti-canonical}. But then there are only finitely many lines on $S$ whereas
by dimension count there exists $C\in|-K_{X}|$ that is singular at
any given $p\in S$, hence the claim follows immediately. Using such
$C\in|-K_{X}|$ and take $p=\mathrm{Sing}(C)$, we have $\sigma^{*}(-K_{X})-2E\sim C'$
where $C'$ is the strict transform of $C$ and $(C'^{2})=(K_{X}^{2})-4\ge0$,
hence $C'$ is nef and $\epsilon(-K_{X},p)\ge n=2$.

It remains to show that all $\bQ$-Fano varieties listed in the statement of Theorem \ref{thm:equality} have only klt singularities. From the equations there we see that the singularities of $X$ are always quotients of $cA$-type singularities that are \'etale in codimension $1$ (hence are klt by \cite[1.42]{mmp} and \cite[Proposition 5.20]{km98}) except when $X$ is a quartic hypersurface $x_{n+1}^2+x_nh=f$ in $\bP(1^n,2^2)$ and $x\in (x_n=x_{n+1}=0)\cap X$ satisfies $\mult_x h=2$ and $\mult_x f\ge 3$. In the latter case, we may assume $x=[1:0:\cdots:0]$ and locally $X$ is a double cover of $\bC^n$ ramified along $D=(x_nh=f)$. If $h$ is not a perfect square, then the pair $(\bC^n,D)$ degenerates to $(\bC^n,D_0)$ where $D_0=(x_nh=0)$ (consider the $\bC^*$-action $(x_1,\cdots,x_n)\mapsto(t^2x_1,\cdots,t^2x_{n-1},tx_n)$ for $t\neq0$). Clearly $(\bC^n,\frac{1}{2}D_0)$ is klt, so it follows from adjunction that $(\bC^n,\frac{1}{2}D)$ is also klt which implies that $X$ is klt by \cite[Proposition 5.20]{km98}. If $h$ is a perfect square, then by \cite[page 168]{km98} we know that $X$ is a cDV singularity which is klt as well.
\qed
\end{emp}

\section{Seshadri constants below \texorpdfstring{$n$}{n}}\label{sec4}

In this section, we prove Theorem \ref{ratsesh} using the following examples.

\begin{expl}
Let $X$ be the weighted projective space $\mathbb{P}(1,a_{1},\cdots,a_{n})$
where $a_{1}\le\cdots\le a_{n}$ are positive integers satisfying
$\gcd(a_1,\cdots,a_n)=1$. Let $p\in X$ be the smooth
point with coordinate $[1:0:\cdots:0]$. We claim that the Seshadri constant of $-K_{X}$ at $p$ is $\epsilon(-K_{X},p)=\frac{1}{a_{n}}(1+\sum_{i=1}^{n}a_{i})$. As before let $\sigma:\hat{X}\rightarrow X$ be the blowup of $X$ at $p$ and $E$ the exceptional divisor. Since $\hat{X}$ is a toric
variety, the torus invariant divisor $L_{x}=\sigma^{*}(-K_{X})-xE$
is nef if and only if it has non-negative intersection number with
all torus invariant lines, and as $-K_{X}$ is ample on $X$ and $E$
has ample conormal bundle, it suffices to check $(L_{x}\cdot l_{i})\ge0$
where $l_{i}$ is the strict transform of the line on $X$ joining
$p$ and the point whose only nonzero coordinate is at the $i$-th
entry ($i>0$). It is straightforward to compute $(L_{x}\cdot l_{i})=\frac{1}{a_{i}}(1+\sum_{i=1}^{n}a_{i})-x$,
so $\epsilon(-K_{X},p)=\frac{1}{a_{n}}(1+\sum_{i=1}^{n}a_{i})$.
Taking $a_1=\cdots=a_{m-1}=1$, $a_m=r-m$, $a_{m+1}=\cdots=a_n=s$ where $1\le m<n$ and $s\ge r>m$ we get $\epsilon(-K_{X},p)=n-m+\frac{r}{s}$, hence the Seshadri constant $\epsilon(-K_{X},p)$ can be any rational number in the interval $(1,n]$.
\end{expl}

\begin{expl}
More generally, let $X$ be the weighted projective space $\mathbb{P}(a_{0},\cdots,a_{n})$ where $a_0\le\cdots\le a_n$ have no common factor and $p\in X$ a smooth point on the line $l:x_2=\cdots=x_n=0$ (such $p$ exists exactly when $\gcd(a_0,a_1)=1$). We claim that $\epsilon(-K_{X},p)$ is the smaller one of $\frac{1}{a_{n}}\sum_{i=0}^{n}a_{i}$ and $\frac{1}{a_{0}a_{1}}\sum_{i=0}^{n}a_{i}$.  Indeed, since $X$ is toric and $p$ is invariant under an $(n-1)$-dimensional subtorus $T$, the Mori cone of $\hX=\mathrm{Bl}_pX$ is generated by a line in $E$ and the strict transform $\hat{C}$ of a curve $C\subseteq X$ containing $p$ that is invariant under the action of $T$. Hence $C$ is the line joining $p$ and a $T$-invariant point. For $D=\sigma^*(-K_X)-\delta E$, we have $(D\cdot\hat{C})=\frac{1}{a_{0}a_{1}}\sum_{i=0}^{n}a_{i}-\delta$ if $C=l$, otherwise $(D\cdot\hat{C})=\frac{1}{a_{j}}\sum_{i=0}^{n}a_{i}-\delta$ for some $j$. The claim then follows by setting $(D\cdot\hat{C})\ge0$.  Taking $a_0=s-1$, $a_1=\cdots=a_{n-1}=s$, $a_n=(r-1)(s-1)-(n-1)s$ with $s\ge r\gg 0$ we get $\epsilon(-K_{X},p)=\frac{r}{s}$, hence the Seshadri constant $\epsilon(-K_{X},p)$ can be any rational number in the interval $(0,1]$ as well.
\end{expl}

\begin{rem}
As the previous examples give some possible values of $\epsilon(-K_X,p)$, it is natural to ask whether these are all possible values. When $\epsilon(-K_X,p)\geq n-1$, the Rationality Theorem \cite[Theorem 3.5]{km98} implies that $\epsilon(-K_X,p)$ is necessarily a rational number. When $\epsilon(-K_X,p)< n-1$, it is not clear to us whether
$\epsilon(-K_X,p)$ is rational, although there are no known examples of irrational Seshadri constants according to \cite[Remark 5.1.13]{pos1}.
\end{rem}

\bibliography{ref}
\bibliographystyle{alpha}

\end{document}